\theoremstyle{plain}                 
\newtheorem{theorem}{Theorem}[section]     
\newtheorem{proposition}[theorem]{Proposition} 
\newtheorem{corollary}[theorem]{Corollary}     
\newtheorem{lemma}[theorem]{Lemma}        
\theoremstyle{definition}           
\newtheorem{definition}{Definition}    
\newtheorem{example}{Example} 
\theoremstyle{remark}       
\newtheorem{remark}{Remark}
\newcommand{\cor}[1]{\bigg{\langle} \,  #1 \, \bigg{\rangle}}
\newcommand{\corc}[1]{\bigg{\langle} \, #1 \,  \bigg{\rangle} ^{\circ}}
\newcommand{\cord}[1]{\bigg{\langle} \, #1 \, \bigg{\rangle} ^{\bullet}}
\newcommand{\corgw}[1]{\bigg{\langle} \,  #1 \, \bigg{\rangle}^{\mathbb{P}^1}}
\newcommand{\corcgw}[1]{\bigg{\langle} \, #1 \,  \bigg{\rangle}^{\mathbb{P}^1,  \circ}}
\newcommand{\cordgw}[1]{\bigg{\langle} \, #1 \, \bigg{\rangle}^{\mathbb{P}^1, \bullet}}
\DeclareMathOperator\Aut{Aut}
\DeclareMathOperator\val{val}
\DeclareMathOperator\RHS{RHS}
\newcommand{\sta}{\mathcal}
\newcommand{\MMMbar}{\overline{\sta M}}
\def\Z{\mathbb{Z}}
\def\N{\mathbb{N}}
\def\R{\mathbb{R}}
\def\C{\mathbb{C}}
\def\CP1{\mathbb{C}\mathrm{P}^1}
\def\SS{\mathcal{S}}
\def\FF{\mathcal{F}}
\def\EE{\mathcal{E}}
\newcommand{\mc}[1]{\mathcal{#1}}
\begin{document}

\title[Tropical Jucys covers]{Tropical Jucys covers}
\author[M.~A.~Hahn]{Marvin Anas Hahn}
\address{M.~A.~H.: Mathematisches Institut, Universit\"at T\"ubingen, Auf der Morgenstelle 10, 72076 T\"ubingen, Germany.}
\email{marvinanashahn@gmail.com}
\author[D.~Lewanski]{Danilo Lewanski}
\address{D.~L.: Max Planck Institut f\"{u}r Mathematik, Vivatsgasse 7, 53111 Bonn, Germany.}
\email{ilgrillodani@mpim-bonn.mpg.de}

\begin{abstract}
We study monotone and strictly monotone Hurwitz numbers from a bosonic Fock space perspective. This yields to an interpretation in terms of tropical geometry involving local multiplicities given by Gromov-Witten invariants. Furthermore, this enables us to prove that a main result of Cavalieri-Johnson-Markwig-Ranganathan is a actually a refinement of the Gromov-Witten/Hurwitz correspondence by Okounkov-Pandharipande.
\end{abstract}

\maketitle
\tableofcontents
\section{Introduction}
Hurwitz numbers and Gromov-Witten invariants with target the Riemann sphere enumerate certain maps between Riemann surfaces. The last two decades have shown several fruitful interactions between those two notions. One of the key revelations for these interactions is the so-called \textit{Gromov-Witten/Hurwitz (GW/H) correspondence} -- found by Okounkov and Pandharipande in \cite{OP} -- which is a substition rule that relates (completed cycles) Hurwitz numbers and stationary descendant Gromov-Witten invariants. One of the key components of the GW/H correspondence is the notion of the \textit{Fock space}, which gives an operator theoretic interpretation of Gromov-Witten invariants and Hurwitz numbers (for more, see \cite{Johnson,OP2}).\par
One successful approach to Hurwitz numbers has been achieved by means of tropical geometry, in which Hurwitz numbers are expressed as enumeration of maps between metric graphs with discrete data (tropical covers) \cite{CJM,CJMa,BBM}. This interpretation has given rise to many interesting insights, such as a study of polynomial behaviour of double Hurwitz numbers \cite{CJMa} or the introduction of tropical mirror symmetry for elliptic curves involving quasi-modularity statements of certain tropical covers \cite{boehm2017tropical}. Furthermore, it was observed in \cite{CJMR} that tropical geometry gives rise to a graphical interface for the Gromov-Witten theory of curves. In particular, it was proved for the case of stationary descendant Gromov-Witten invariants with target $\mathbb{P}^1$ that the Fock space notion in \cite{OP} is strongly related to the tropical expression derived in \cite{CJMR}. The tropical covers are weighted by local multiplicities, which are Gromov-Witten invariants themselves.\par
In recent years many variants of Hurwitz numbers have started to appear in the literature and have grown in importance, also in relation with Chekhov-Eynard-Orantin topological recursion theory. Among those we focus on two, namely \textit{monotone} and \textit{strictly monotone Hurwitz numbers}. Monotone Hurwitz numbers appear in the context of random matrix theory as coefficients of large $N$ expansion of the Harish-Chandra-Itzykson-Zuber model, whereas strictly monotone Hurwitz numbers enumerate certain types of Grothendieck dessins d'enfant. These numbers have been intensively studied in terms of operators acting on the Fock space. Furthermore, special cases of these enumerations may be expressed by means of tropical geometry as well \cite{DK,hahn2017monodromy}. These expressions however differ fundamentally from the tropical covers involved in \cite{CJMR}. It is natural to ask whether there exists a tropical interpretation involving local Gromov-Witten multiplicities as in the classical case \cite{CJMR}.\par 
The aim of this paper is three-fold. Starting from previous work on monotone and strictly monotone Hurwitz numbers in terms of the fermionic Fock space \cite{ALS,HKL}, we derive an expression of these variants of Hurwitz numbers in terms of the bosonic Fock space.\par 
We then use the connection between bosonic operators and tropical geometry studied in \cite{BG,CJMR} to answer the previously posed question positively. In fact we derive a new tropical expression for monotone and strictly monotone Hurwitz numbers that shares several features with the results of \cite{CJMR}, as the tropical covers are once again weighted by local multiplicities given by Gromov-Witten invariants. This new interpretation has the advantage that the curves involved carry less non-geometric information than the ones in \cite{DK,hahn2017monodromy} and are closer to the tropical curves involved in \cite{CJMa,boehm2017tropical}.\par 
Finally, we use our methods to compare the work of \cite{CJMR} to the work in \cite{OP} and prove that theorem 5.3.4 in \cite{CJMR} is a refinement of the original GW/H correspondence.

\subsection{Structure of the paper}
In section \ref{sec:pre}, we introduce the basic notions revolving around the relation between Hurwitz numbers, Gromov-Witten theory, the Fock space and tropical geometry needed for our work. In section \ref{sec:bos} we derive a bosonic expression for monotone and strictly monotone Hurwitz numbers from the fermionic one. Our main result lies in section \ref{sec:trop}, where we express monotone and strictly monotone Hurwitz numbers in terms of tropical covers with local multiplicities given by Gromov-Witten invariants. Lastly, in section \ref{sec:grom} we analyse and compare the original \cite{OP} and the tropical \cite{CJMR} version of Gromov-Witten/Hurwitz correpondence via the semi-infinite wedge formalism.

\subsection{Acknowledgments}
We would like to thank Hannah Markwig for useful discussions. The work of D.L. is supported by the Max-Planck-Gesellschaft. M.A.H. gratefully acknowledges partial support by DFG SFB-TRR 195 “Symbolic tools in mathematics and their applications”, project A 14 “Random matrices and Hurwitz numbers” (INST 248/238-1).

\section{Preliminaries}
\label{sec:pre}
In this section, we recall the basic notions regarding the Fock space and tropical geometry, needed for our work. A concise introduction to the needed Gromov-Witten theory can be found in \cite{CJMR} section 2.

\subsection{Semi-infinite wedge formalism}

We introduce the operators needed for the derivation of our results. For a self-contained introduction to the infinite wedge space formalism, we refer the reader to \cite{OP,Johnson}, where most relevant objects are defined. 
\par
Let us define $\varsigma(z) = 2\sinh(z/2) = e^{z/2} - e^{-z/2}$ and $\SS(z) = \varsigma(z)/z$. We are particularly interested in the expansion of
\begin{equation}
\frac{1}{\varsigma(z)} = \frac{1}{2 \sinh(z/2)} = \sum_{l=0} c_{2l-1} z^{2l-1} = \frac{1}{z} - \frac{1}{24}z + \frac{7}{5760}z^3 + O(z^5)
\end{equation}
\noindent
The coefficients $c_l$ have several well known combinatorial and geometric interpretations. The first coefficient is $c_{-1} = 1$, and \begin{equation}\label{eq:formscl}
c_{2l-1}  \;\; = \;\; - \frac{(2^{2l-1} - 1)}{2^{2l-1}}\frac{B_{2l}}{(2l)!}
\;\; = \;\;
  (-1)^{l} \int_{\MMMbar_{l,1} } \!\!\!\lambda_l\psi_1^{2l-2}
\;\; = \;\;
  \left \langle \tau_{2l-2}(\omega) \right \rangle_{l,1}^{\mathbb{P}^1}, \qquad \text{ for   } l>0,
\end{equation}
where:
\begin{enumerate}
\item[\textit{i).}] $B_k$ is the $k$-th Bernoulli number, defined by the generating series $\frac{t}{e^t - 1} = \sum_{k=0}^{\infty} B_k \frac{t^k}{k!}$.
\item[\textit{ii).}] $\MMMbar_{l,1}$ is the moduli space of stable curves of genus $l$ and one marked point $p$. The class $\lambda_l := c_l(\mathbb{E})$ is the top Chern class of the Hodge bundle $\mathbb{E} \rightarrow \MMMbar_{l,1}$, which is the rank $l$ vector bundle with fiber $H^0(C, \omega_C)$ over $[C, p]$. The class $\psi_1 = c_1(\mathbb{L})$ is the first Chern class of the cotangent line bundle $\mathbb{L} \rightarrow \MMMbar_{l,1}$ that has fiber $T^{*}_{C,p}$ over the moduli point $[C,p]$.
\item[\textit{iii).}] Recall that for two partitions $\nu, \mu$ of same size $d= |\mu| = |\nu|$, the relative Gromov-Witten invariant with target $\mathbb{P}^1$ is defined as
$$
\corgw{\nu,  \tau_{k_1}(\omega) \tau_{k_2}(\omega) \cdots \tau_{k_n}(\omega) ,\mu}_{g,n} := \int_{[\overline{\mathcal{M}}_{g,n}(\mathbb{P}^1, \nu, \mu, d)]^{vir}} \prod_{i=1}^n ev_i^*(\omega) \psi_i^{k_i},
$$
where $\overline{\mathcal{M}}_{g,n}(\mathbb{P}^1, \nu, \mu, d)$ is the moduli space of stable curves over $\mathbb{P}^1$ relative to the partitions $\nu$ and $\mu$, $\psi_i$ is again $c_1$ of the cotangent line bundle over the $i$-th marked point, the map $ev_i: \overline{\mathcal{M}}_{g,n}(\mathbb{P}^1, \nu, \mu, d) \rightarrow \mathbb{P}^1$ is the $i$-th evaluation morphism that sends the moduli point $[C, p_1, \dots, p_n, f]$ to $f(p_i)$, with $f:C \rightarrow \mathbb{P}^1$ the stable relative map, and, finally,  $\omega$ denotes the class of a point in $\mathbb{P}^1$. For a complete introduction on the topic we recommend \cite{V}.
When the partitions are omitted and the degree is not specified, degree zero and empty partitions are meant. The subscripts "$\circ$" or "$\bullet$" refer to the \emph{connected} or the \emph{not necessarily connected} (also called disconnected for simplicity) Gromov-Witten invariant, and correspond in the definition to connected or not necessarily connected (disconnected) stable maps in the moduli space, respectively.
\end{enumerate}

Let $V = \bigoplus_{i \in \Z + 1/2} \C \underline{i}$ be an infinite-dimensional complex vector space with a basis labeled by half-integers, written as \( \underline{i}\). The semi-infinite wedge space or Bosonic Fock space \( \mc{V} := \bigwedge^{\frac{\infty}{2}} V \) is the space spanned by vectors
\begin{equation}
\underline{k_1} \wedge \underline{k_2} \wedge \underline{k_3} \wedge \cdots
\end{equation}
such that for large \( i\), \( k_i + i -\frac{1}{2} \) equals a constant, called the charge, imposing that \( \wedge \) is antisymmetric. The charge-zero sector 
\begin{equation}
\mathcal{V}_0 = \bigoplus_{n \in \mathbb{N} } \bigoplus_{\lambda\,  \vdash\, n} \C v_{\lambda}
\end{equation}
is then the span of all of the semi-infinite wedge products \( v_{\lambda} = \underline{\lambda_1 - \frac{1}{2}} \wedge \underline{\lambda_2 - \frac{3}{2}} \wedge \cdots \) for integer partitions~$\lambda$. The space $\mathcal{V}_0$ has a natural inner product $(\cdot,\cdot )$ defined by declaring its basis elements to be orthonormal. 
The element corresponding to the empty partition $v_{\emptyset}$ is called the vacuum vector and denoted by $|0\rangle$. Similarly, we call the covacuum vector its dual in \( \mc{V}_0^* \), and denote it by $\langle0|$. If $\mathcal{P}$ is an operator acting on $\mathcal{V}_0$, we denote with $ \langle \mathcal{P}\rangle^{\bullet}$ the evaluation $ \langle 0 |  \mathcal{P} |0\rangle$.
\noindent
For $k$ half-integer, define the operator $\psi_k$ by $\psi_k : (\underline{i_1} \wedge \underline{i_2} \wedge \cdots) \ \mapsto \ (\underline{k} \wedge \underline{i_1} \wedge \underline{i_2} \wedge \cdots)$, and let $\psi_k^{\dagger}$ be its adjoint operator with respect to~$(\cdot,\cdot)$. The normally ordered products of $\psi$-operators
\begin{equation}
E_{i,j} \coloneqq \begin{cases}\psi_i \psi_j^{\dagger}, & \text{ if } j > 0 \\
-\psi_j^{\dagger} \psi_i & \text{ if } j < 0 \end{cases} 
\end{equation}
are well-defined operators on $\mathcal{V}_0$. 
For $n$ any integer, and $z$ a formal variable, define the operators
\begin{equation}
\mathcal{E}_n(z) = \!\!\!\! \sum_{k \in \Z + \frac12} \!\!\! e^{z(k - \frac{n}{2})} E_{k-n,k} + \frac{\delta_{n,0}}{\varsigma(z)}, \qquad \tilde{\mathcal{E}}_0(z) = \!\!\!\! \sum_{k \in \Z + \frac12} \!\!\! e^{zk} E_{k,k}, \qquad  \alpha_n = \mathcal{E}_n(0) = \!\!\!\! \sum_{k \in \Z + \frac12} \!\!\! E_{k-n,k}.
\end{equation}
Their commutation formulae are known to be
\begin{equation}
  \left[\mathcal{E}_a(z),\mathcal{E}_b(w)\right] =
\varsigma (aw-bz)
\,
\mathcal{E}_{a+b}(z+w)
\; \text{ for } z \neq 0 \text{ or } w \neq 0, 
\end{equation}
and otherwise
\begin{equation}\label{eq:commalphas}
[\alpha_k, \alpha_l ] = k \delta_{k+l,0}.
\end{equation}
The commutation relations involving $\tilde{\mathcal{E}}_0$ are clearly the same as the ones involving $\mathcal{E}_0$.
The coefficients of both operators will play a crucial role in the rest of the paper:
\begin{equation}
\FF_l  := [z^l] \tilde{\mathcal{E}}_0(z) = \sum_{k \in \Z + 1/2} \frac{k^l}{l!} E_{k,k}, \qquad \qquad 
\FF_l^{sh}  := [z^l] \mathcal{E}_0(z) = \sum_{k \in \Z + 1/2} \frac{k^l}{l!} E_{k,k} + c_l.
\end{equation}
The operator \( C = \mc{F}_0 \) is called the \emph{charge operator}, as its eigenvalues on basis vectors are given by the charge. In particular, it acts as zero on \( \mc{V}_0 \). The operator $E = \mathcal{F}_1$ is called the \emph{energy operator}. Observe that the commutation relation for $\alpha$ operators implies that
\begin{equation}\label{eq:pairing}
\cord{
\prod_{j=1}^{\ell(\mu)} \alpha_{\mu_j}
\prod_{i=1}^{\ell(\nu)} \alpha_{-\nu_i} 
}
=
|\mathrm{Aut}(\mu)|\prod_{i=1}^m\mu_i \cdot \delta_{\mu,\nu},
\end{equation}
where $|\Aut(\mu)| = \prod_{i=1}^{\infty} m_i(\mu)!$, and $m_i(\mu)$ the number of parts equal to $i$ in the partition $\mu$.

\subsection{Hurwitz numbers in the semi-infinite wedge formalism}
We are now ready to express the Hurwitz numbers in terms of the semi-infinite wedge formalism. The monotone and strictly monotone Hurwitz numbers have respectively the following expressions, derived in \cite{ALS}. We use these expressions as definitions. For $g \in \N$, two partitions $\mu$ and $\nu$, $m = 2g - 2 + \ell(\mu) + \ell(\nu)$, let
\begin{align}
h_{g;\mu,\nu}^{\le, \bullet}&:=\frac{[u^m]}{\prod\mu_i\prod\nu_j}\left\langle \prod_{i=1}^m \alpha_{\mu_i} \mathcal{D}^{(h)}(u) \prod_{j=1}^n\alpha_{-\nu_j}\right\rangle^{\bullet}\,,
\label{eq:hfockh}
\\
h_{g;\mu,\nu}^{<, \bullet}&:=\frac{[u^m]}{\prod\mu_i\prod\nu_j}\left\langle \prod_{i=1}^m \alpha_{\mu_i} \mathcal{D}^{(\sigma)}(u) \prod_{j=1}^n\alpha_{-\nu_j}\right\rangle^{\bullet}\,,\label{eq:hfocksigma}
\end{align}
where the operators $\mathcal{D}^{(h)}(u), \, \mathcal{D}^{(\sigma)}(u)$ depending on the formal variable $u$ have the vectors $v_{\lambda}$ as eigenvectors with eigenvalues the generating series of complete homogeneous polynomials $h$ and elementary symmetric polynomials $\sigma$, evaluated in the content $\mathbf{cr}^{\lambda}$ of the Young tableau associated to the partition $\lambda$:
\begin{equation}
\mathcal{D}^{(h)}(u).v_{\lambda} = \sum_{v=0} h_v(\mathbf{cr}^{\lambda}) u^v v_{\lambda}\,,
\qquad
\mathcal{D}^{(\sigma)}(u).v_{\lambda} = \sum_{v=0} \sigma_v(\mathbf{cr}^{\lambda}) u^v v_{\lambda}\,.
\end{equation}
Remember that the content of a box \( (i,j) \) in the Young tableau of a partition is given by \( \mathbf{cr}_{(i,j)} = j-i \), and the content \( \mathbf{cr}^\lambda \) of a partition \( \lambda \) is the multiset of all contents of boxes in its Young diagram (\( \mathbf{cr} \) stands for \textbf{c}olumn-\textbf{r}ow). For example, the partition \( (3,2) \) has boxes \( (1,1)\), \( (1,2)\), \( (1,3)\), \((2,1)\), and \( (2,2) \), so \( \mathbf{cr}^{(3,2)} = \{ 0,1,2,-1,0\} \). The connected monotone and strictly monotone Hurwitz numbers are defined in the same way by taking the connected vacuum expectation on the right-hand side. The connected expectation is defined from the disconnected one by means of the inclusion-exclusion formula.
\par

\subsection{Tropical curves}
We introduce now the concepts of tropical curve and tropical cover, and we describe the relation between Fock space vacuum expectations and their tropical counterparts. This relation is sometimes indicated in tropical geometry with the motto
$$
\textit{``Bosonification is Tropicalisation."}
$$

A detailed introduction to tropical covers can be found in \cite{ABBR}.

\begin{definition}
\label{def:abstrop}
An \textit{abstract tropical curve} is a connected
metric graph $\Gamma$ with unbounded edges called ends, together with a function associating a genus $g(v)$ to each vertex $v$. Let $V(\Gamma)$ be the set of its vertices. Let $E(\Gamma)$ and $E'(\Gamma)$ be the set of its internal (or bounded) edges and its set of all edges, respectively. The set of ends is therefore $E'(\Gamma) \setminus E(\Gamma)$, and all ends are considered to have infinite length. The genus of an abstract tropical curve $\Gamma$ is
 $ g(\Gamma)\coloneqq h^1(\Gamma) + \sum_{v \in V(\Gamma)} g(v)$,
where $h^1(\Gamma)$ is the first Betti number of the underlying graph.
An \textit{isomorphism} of a tropical curve is an automorphism of the underlying graph that respects edges' lengths and vertices' genera.
The \textit{combinatorial type} of a tropical curve is obtained by disregarding its metric structure.
\end{definition}

\begin{definition}
\label{def:tropmorph}
A tropical cover is a surjective harmonic map $\pi:\Gamma_1\to\Gamma_2$ between abtract tropical curves as in \cite{ABBR}, i.e.:
\begin{itemize}
\item[\textit{i).}] Let $V(\Gamma_i)$ denote the vertex set of $\Gamma_i$, then we require $\pi(V(\Gamma_1))\subset V(\Gamma_2)$;
\item[\textit{ii).}] Let $E'(\Gamma_i)$ denote the edge set of $\Gamma_i$, then we require $\pi^{-1}(E'(\Gamma_2))\subset E'(\Gamma_1)$;
\item[\textit{iii).}] For each edge $e\in E'(\Gamma_i)$, denote by $l(e)$ its length. We interpret $e\in E'(\Gamma_1),\pi(e)\in E'(\Gamma_2)$ as intervals $[0,l(e)]$ and $[0,l(\pi(e))]$, then we require $\pi$ restricted to $e$ to be a linear map of slope $\omega(e)\in\mathbb{Z}_{\ge0}$, that is $\pi:[0,l(e)]\to[0,l(\pi(e))]$ is given by $\pi(t)=\omega(e)\cdot t$. We call $\omega(e)$ the \textit{weight} of $e$. If $\pi(e)$ is a vertex, we have $\omega(e)=0$.
\item[\textit{iv).}] For a vertex $v\in\Gamma_1$, let $v'=\pi(v)$. We choose an edge $e'$ adjacent to $v'$. We define the local degree at $v$ as
\begin{equation}
d_v=\sum_{\substack{e\in\Gamma_1\\\pi(e)=e'}}\omega_e.
\end{equation}
We require $d_v$ to be independent of the choice of edge $e'$ adjacent to $v'$. We call this fact the \textit{balancing} or \textit{harmonicity condition}.
\end{itemize}

We furthermore introduce the following notions:
\begin{itemize}
\item[\textit{i).}] The \textit{degree} of a tropical cover $\pi$ is the sum over all local degrees of pre-images of any point in $\Gamma_2$. Due to the harmonicity condition, this number is independent of the point in $\Gamma_2$.
\item[\textit{ii).}] For any end $e$, we define a partion $\mu_e$ as the partition of weights of the ends of $\Gamma_1$ mapping to $e$. We call $\mu_e$ the \textit{ramification profile} above $e$.
\end{itemize}
\end{definition}

We give in the following a formulation of Wick's theorem that suits better our purposes. In fact, Wick's theorem is generally expressed in terms of Feynman graphs, which can then be regarded as tropical covers (for more details see, e.g., \cite{BG}, last section). We state the result in terms of tropical covers, directly.

\begin{proposition}[Wick's theorem]
\label{prop:wick}
Let $\mu$ and $\nu$ be two partitions of the same size. Consider any collection of non-empty finite sets of non-zero integers $\textbf{x}_1, \dots, \textbf{x}_n$ s.t. for each $j$ we have $\sum_{x_{i,j} \in \textbf{x}_{j}} x_{i,j} = 0$. Denote by $x_i^+$ ($x_i^-$) the tuple of positive (resp. negative) entries of $x_i$. Then the vacuum expectation
\begin{equation}
\cor{
\prod_{t=1}^{\ell(\mu)} \alpha_{\mu_t} 
\prod_{0 > x_{i,1} \in \textbf{x}_1} \!\!\!\!\!\!\alpha_{x_{i,1}}
\prod_{0 < x_{i,1} \in \textbf{x}_1} \!\!\!\!\!\!\alpha_{x_{i,1}}
\cdots \cdots
\prod_{0 > x_{i,n} \in \textbf{x}_n} \!\!\!\!\!\!\alpha_{x_{i,n}}
\prod_{0 < x_{i,n} \in \textbf{x}_n} \!\!\!\!\!\!\alpha_{x_{i,n}}
\prod_{j=1}^{\ell(\nu)} \alpha_{-\nu_j}
} 
\end{equation}
is equal to 
\begin{equation}
\label{equ:trop}
\sum_{\pi \in \Gamma( \mathbb{P}^1_{\text{trop}}; \mu, \nu)}\frac{1}{|\mathrm{Aut}(\pi)|} \prod_{i=1}^n|\mathrm{Aut}(x_i^+)||\mathrm{Aut}(x_i^-)|\prod_{e \in E(\Gamma)} \omega_e,
\end{equation}
where $\Gamma( \mathbb{P}^1_{\text{trop}}; \mu, \nu)$ is the set of  tropical covers 
$
\pi: \Gamma \longrightarrow \mathbb{P}^1_{trop} = \R
$
 such that

\begin{itemize}
\item[\textit{i).}] The unbounded left (resp. right) pointing ends of $\Gamma$ have weights given by the partition $\mu$ (resp. $\nu$).
\item[\textit{ii).}] $|V(\Gamma)| = n$. Let $\{v_1, \dots, v_n\}$ be the set of its vertices ordered linearly from left to right. The local structure at $v_j$ is determined by $\textbf{x}_j$. For each operator $\alpha_{x_{j,i}}$, we draw an edge germ of weight $|x_{j,i}|$, which points to the left for $x_{j,i}<0$ and to the right $x_{j,i}>0$.
In particular, the valence $ \val(v_j)$ of $v_j$ is equal to $|\textbf{x}_j|$.
\item[\textit{iii).}] $|E(\Gamma)| = \sum_j |\textbf{x}_j|/2$. Every element of $E(\Gamma)$ is formed  by connecting one edge germ pointing to the right with one edge germ with same weight but pointing to the left. Viceversa, every edge germ that does not correspond to an unbounded edge (i.e. does not correspond to an element of $\nu$ or $\mu$) must be matched to another edge germ of the same weight $\omega_e$ to form an internal edge $e \in E(\Gamma)$. This means in particular that each internal operator $\alpha_{x_{j,i}}$ must be matched with an operator $\alpha_{x_{k,l} = -x_{j,i}}$.
\item[\textit{iv).}] For the genus of each vertex, we have $g(v_i)=0$.
\end{itemize}
\end{proposition}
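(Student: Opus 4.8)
The plan is to read the stated identity as the combinatorial content of Wick's theorem for the Heisenberg algebra \eqref{eq:commalphas}, packaged so that contractions become edges of tropical covers. First I would fix the standard conventions $\alpha_n|0\rangle = 0$ for $n>0$ and $\langle 0|\alpha_n = 0$ for $n<0$, and extract from \eqref{eq:commalphas} the only non-vanishing elementary bracket: if a factor of positive index $k$ stands to the \emph{left} of a factor of negative index, then $\langle 0|\alpha_k\alpha_{-k}|0\rangle = k$, while every other left/right two-operator bracket vanishes. Then I would expand the full vacuum expectation by repeatedly commuting each annihilation-type operator to the right until it meets $|0\rangle$, using \eqref{eq:commalphas} at each step. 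Because the $\alpha$'s obey the sign-free bosonic relations, this produces exactly the sum over all perfect matchings of the operators into contractions, each matched pair $(\alpha_k,\alpha_{-k})$ with the positive index on the left contributing the factor $k$ and no signs appearing. A matching gives a non-zero term precisely when every positive-index operator is paired with a strictly later operator of the opposite index. Since $\alpha_n = \mathcal{E}_n(0)$ has the sign-free commutator with \emph{no} higher-order $\varsigma$-corrections (those would contribute the $c_{2l-1}$ positive-genus terms), this is exactly the source of the requirement $g(v_j)=0$ at every vertex.

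Next I would read each non-zero matching as a tropical cover $\pi:\Gamma\to\R$ along the dictionary of the statement: the blocks of the product are placed at points $v_1<\dots<v_n$ of $\R$; each factor $\alpha_{x_{i,j}}$ becomes an edge germ at $v_j$ of weight $|x_{i,j}|$ pointing right if $x_{i,j}>0$ and left if $x_{i,j}<0$; the leftmost factors $\alpha_{\mu_t}$ and rightmost factors $\alpha_{-\nu_s}$ become the left and right ends; and each contraction becomes an edge joining its two germs. The ``positive-on-the-left'' rule forces every edge to increase in the $\R$-direction, so $\pi$ is a genuine harmonic map with vertices at distinct points and $|V(\Gamma)|=n$, and the hypothesis $\sum_{x_{i,j}\in\mathbf{x}_j}x_{i,j}=0$ is exactly the balancing condition at $v_j$. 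Each matched pair contributes its weight: the vertex--vertex contractions assemble into $\prod_{e\in E(\Gamma)}\omega_e$, while the contractions meeting an end-operator supply the weights of the corresponding ends, which is how the data of $\mu$ and $\nu$ enters.

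The remaining and main point is to pass from labeled matchings (what Wick's theorem produces) to isomorphism classes of covers (what the sum in \eqref{equ:trop} is indexed by). Here I would let $G = \prod_{j=1}^n \Aut(x_j^+)\times\Aut(x_j^-)$ act on the set of matchings realizing a \emph{fixed} cover $\pi$ by permuting equal-weight germs at each vertex. The action is transitive, since any two matchings producing the same abstract $\pi$ differ by such a relabeling, and the stabilizer of a matching is precisely $\Aut(\pi)$ realized as germ permutations. By orbit--stabilizer, the number of matchings producing $\pi$ is $\prod_j|\Aut(x_j^+)||\Aut(x_j^-)|/|\Aut(\pi)|$. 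Summing the common edge-weight product over these matchings and then over all $\pi\in\Gamma(\mathbb{P}^1_{\text{trop}};\mu,\nu)$ yields \eqref{equ:trop}.

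The delicate part, and the step I expect to be the main obstacle, is this last bookkeeping: verifying transitivity and identifying the stabilizer with $\Aut(\pi)$, while being careful that the symmetries among equal ends of $\mu$ (or $\nu$) are \emph{already} generated by the germ relabelings and are therefore not double-counted. A useful consistency check is the edgeless degenerate case, where the count must reproduce the factor $|\Aut(\mu)|\prod_i\mu_i$ of \eqref{eq:pairing}; matching this fixes the convention that genuine automorphisms $\Aut(\pi)$ act only on the internal structure of the cover.
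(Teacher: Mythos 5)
The paper does not actually prove this proposition: it is stated as a reformulation of the classical bosonic Wick theorem in the language of tropical covers, with the details delegated to the literature. So your argument is not an alternative to the paper's proof but a supply of the missing one, and the route you take (expand the vacuum expectation via \eqref{eq:commalphas} into a sum over perfect matchings with contraction $\langle 0|\alpha_k\alpha_{-k}|0\rangle=k$ for $k>0$, read each non-vanishing matching as a cover with edges of positive slope, then pass to isomorphism classes by orbit--stabilizer) is the standard and correct one. Your observation that a positive germ in block $j$ stands to the right of the negative germs of the same block, so that no contraction can stay at a vertex and every edge genuinely moves rightward, is exactly the point that makes the harmonic-map dictionary work. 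One bookkeeping remark: each contraction contributes the weight of the edge it creates, including the contractions involving the $\alpha_{\mu_t}$ and $\alpha_{-\nu_j}$; so for the identity to hold as displayed, the product over $e\in E(\Gamma)$ must be taken over \emph{all} edges, unbounded ones included (the paper's Definition~\ref{def:abstrop} reserves $E(\Gamma)$ for bounded edges, and the proof of the main theorem later divides out precisely the resulting factor $\prod\mu_i\prod\nu_j$). Your phrasing leaves this implicit; it should be stated.

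The genuine soft spot is where you expected it: transitivity of $G=\prod_j\Aut(x_j^+)\times\Aut(x_j^-)$ on the matchings realizing a fixed $\pi$. As written, the claim that ``symmetries among equal ends of $\mu$ are already generated by the germ relabelings'' is false when two equal parts of $\mu$ attach to \emph{distinct} vertices: take $\mu=\nu=(1,1)$ and two vertices each carrying $\mathbf{x}_j=\{-1,1\}$; the correlator equals $8$, the matchings sending $\mu_1\mapsto v_1,\mu_2\mapsto v_2$ and $\mu_1\mapsto v_2,\mu_2\mapsto v_1$ are not related by any germ permutation, and if ends are regarded as unlabeled the right-hand side of \eqref{equ:trop} undercounts. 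The resolution is the convention your consistency check against \eqref{eq:pairing} already forces: the ends must be treated as labeled (equivalently, $\Aut(\pi)$ is required to fix every end), so that the two matchings above produce \emph{different} covers. With that convention transitivity does hold, and the stabilizer of a matching is exactly the diagonal symmetric group on each bundle of parallel equal-weight bounded edges, which is $\Aut(\pi)$. So your proof is correct once the labeled-ends convention is made explicit and the intermediate justification of transitivity is replaced by it; without that, the orbit--stabilizer count fails in the example above.
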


\section{Bosonification}
In this section we derive a bosonic expression for monotone and strictly monotone Hurwitz numbers from the fermionic one. This is done by means of the Boson-Fermion correspondence. The fermion expression itself is recovered from the fermionic expression for the power sums via transformations at the level of symmetric functions.
\label{sec:bos}
\subsection{Newton's identities: $\sigma$ and $h$ polynomials in terms of power sums $p$}
\label{sec:Newton}
Let $\textbf{X}$ be the set of variables $\{X_1, \dots, X_n\}$. We indicate with $\sigma_m, \, h_m$ and $p_m$ the symmetric elementary polynomials, the complete homogeneous polynomials and the power sums, respectively:

\begin{equation*}
\sigma_m(\textbf{X}) = \!\!\!\!\!\!\!\! \sum_{1 \leq i_1 < \dots < i_m \leq n} X_{i_1}\cdots X_{i_m}
\qquad 
\qquad
h_m(\textbf{X}) = \!\!\!\!\!\!\!\!\sum_{1 \leq i_1 \leq \dots \leq i_m \leq n} X_{i_1}\cdots X_{i_m}
\qquad 
\qquad
p_m(\textbf{X}) = \sum_{i=1}^n X_i^m
\end{equation*}
\noindent
Newton identities describe relations between the power sums $p_m$ and bases $\sigma_m$ and $h_m$:
\begin{align}
\label{Newtonsigma}
\sigma_m(\textbf{X}) & = [z^m]. \exp \left( - \sum_{i \geq 1 } \frac{p_i (\textbf{X})}{i} (-z)^i \right),
\\ 
\label{Newtonh}
h_m(\textbf{X}) & = [z^m]. \exp \left(\sum_{i \geq 1 } \frac{p_i (\textbf{X}) }{i} z^i \right).
\end{align}

\begin{example} Let us test Newton identities for $m=2$ and $n=3$. From the right-hand side of \eqref{Newtonsigma} we compute
\begin{align*}
\RHS\eqref{Newtonsigma}=&\frac{(-1)^3}{1!} \frac{p_2(X_1, X_2, X_3)}{2} + \frac{(-1)^2}{2!}p_1^2(X_1, X_2, X_3) 
= \frac{1}{2}\left[ -X_1^2 - X_2^2 - X_3^2 + (X_1 + X_2 + X_2)^2\right] \\
=& X_1X_2 + X_1X_3 + X_2X_3,
\end{align*}
which is indeed equal to $\sigma_2(X_1, X_2, X_3)$. From the right-hand side of \eqref{Newtonh} we compute
\begin{align*}
\RHS\eqref{Newtonh} = &\frac{1}{1!} \frac{p_2(X_1, X_2, X_3)}{2} + \frac{1}{2!}p_1^2(X_1, X_2, X_3) 
= \frac{1}{2}\left[ X_1^2 + X_2^2 + X_3^2 + (X_1 + X_2 + X_2)^2\right] \\
=& X_1X_2 + X_1X_3 + X_2X_3  + X_1^2 + X_2^2 + X_3^2 ,
\end{align*}
which is indeed equal to $h_2(X_1, X_2, X_3)$.
\end{example}

We can write it more explicitly in terms of (ordered) partitions $\lambda$. We have
\begin{align*}
h_m & = [z^m]. \exp \left(\sum_{i \geq 1 } \frac{p_i}{i} z^i \right) =  \sum_{\lambda \vdash m} \frac{1}{\ell(\lambda)!} \prod_i^{\ell(\lambda)} \frac{p_{\lambda_i}}{\lambda_i}\quad,
\\
\sigma_m & = [z^m]. \exp \left( - \sum_{i \geq 1 } \frac{p_i}{i} (-z)^i \right) = \sum_{\lambda \vdash m} \frac{(-1)^{m + \ell(\lambda)}}{\ell(\lambda)!} \prod_i^{\ell(\lambda)} \frac{p_{\lambda_i}}{\lambda_i}\quad.
\end{align*}

\subsection{Lascoux-Thibon operator: power sums $p$ of the content in terms of $\mathcal{F}$ operators}
\label{sec:LascouxThibon}

Let $\mathbf{cr}^{\lambda}$ be the content of the Young diagram associated to the partition $\lambda$, let $p_k$ be the $k$-th power sum. We moreover adopt the convention $\FF_r = 0$ for $r\leq 0$, as our calculations take all place in the charge zero sector.
\noindent
\begin{lemma}
For any partition $\lambda$ we have
\begin{equation}
\frac{p_{l}(\mathbf{cr}^{\lambda})}{l!}v_{\lambda} = \sum_{k=0}^{\infty} c_{2k-1} \FF_{l - ( 2k-1)}.v_{\lambda}.
\end{equation}
\begin{proof}
We can rephrase the result of Lascoux and Thibon (\cite{LT}, proposition 3.3) in our notation as
$$
 \left[ \frac{\tilde \EE_0(z)}{ \varsigma (z)} - E \right]
 v_{\lambda} = \sum_{l=1}^{\infty} \frac{p_l(\mathbf{cr}^{\lambda})}{l!} z^{l} v_{\lambda}.
$$
where by definition we have
$
\tilde \EE_0(z) := \sum_{k \in \Z + 1/2} e^{zk}E_{kk} = \sum_{r=1} \FF_r z^r,$ and $ \FF_1 = E$.
Therefore subtracting $E$ gets rid of the first term of the expansion
$
\frac{\tilde \EE_0(z)}{ \varsigma (z)} - E  = \sum_{l=1}z^l \left[ \sum_{k=0}c_{2k-1} \FF_{l-( 2k-1)}\right].
$
\end{proof}
\end{lemma}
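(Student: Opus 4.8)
The plan is to reduce the claim to a single operator-valued generating-function identity due to Lascoux and Thibon and then to read off the coefficient of $z^l$. First I would recall the relevant statement from \cite{LT} and transcribe it into the operator language of this paper, where it takes the form
$$
\left[ \frac{\tilde{\mathcal{E}}_0(z)}{\varsigma(z)} - E \right] v_\lambda = \sum_{l \ge 1} \frac{p_l(\mathbf{cr}^\lambda)}{l!}\, z^l\, v_\lambda .
$$
Since each $\FF_r = \sum_{k} \tfrac{k^r}{r!} E_{k,k}$ is diagonal in the basis $\{v_\lambda\}$, the left-hand side acts as a scalar power series on $v_\lambda$, and the power sums of the content appear precisely as its coefficients. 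The whole lemma then follows by comparing this identity with an explicit expansion of the bracketed operator.

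The key computation is a Cauchy product of two series. On the charge-zero sector I would write $\tilde{\mathcal{E}}_0(z) = \sum_{r \ge 0} \FF_r z^r$, noting that $\FF_0 = C$ acts as zero on $\mathcal{V}_0$ and that $\FF_1 = E$, and combine this with the expansion $1/\varsigma(z) = \sum_{k \ge 0} c_{2k-1} z^{2k-1}$ with $c_{-1}=1$. Multiplying the two series, the coefficient of $z^l$ becomes $\sum_{k \ge 0} c_{2k-1}\, \FF_{l-(2k-1)}$, where the convention $\FF_r = 0$ for $r \le 0$ truncates the sum to finitely many terms. The role of the subtracted $E$ is exactly to cancel the $z^0$ term of the product: that term arises from the single pair $k=0$, $r=1$ and equals $c_{-1}\FF_1 = E$, so after the cancellation the series starts at $z^1$, matching the right-hand side of the Lascoux-Thibon identity.

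Finally I would equate coefficients of $z^l$ on the two sides for each $l \ge 1$, obtaining
$$
\frac{p_l(\mathbf{cr}^\lambda)}{l!}\, v_\lambda = \sum_{k \ge 0} c_{2k-1}\, \FF_{l-(2k-1)}\, v_\lambda ,
$$
which is the asserted formula. I expect the only genuinely delicate point to be the faithful translation of the Lascoux-Thibon result into this formalism, that is, matching their normalization of the contents and of the diagonal operator to the $\tilde{\mathcal{E}}_0$ and $\FF$ conventions used here, together with the careful index bookkeeping in the Cauchy product so that both the shift $r = l-(2k-1)$ and the clean cancellation of the $z^0$ term come out correctly.
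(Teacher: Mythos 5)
Your argument is correct and follows essentially the same route as the paper's proof: transcribe the Lascoux--Thibon identity into the $\tilde{\mathcal{E}}_0$ formalism, expand $\tilde{\mathcal{E}}_0(z)/\varsigma(z)$ as a Cauchy product using the convention $\FF_r = 0$ for $r \le 0$, observe that the subtracted $E = c_{-1}\FF_1$ cancels the constant term, and compare coefficients of $z^l$. The paper is terser but the underlying computation is identical, and your handling of the $z^0$ cancellation and the index shift $r = l - (2k-1)$ is exactly what is needed.
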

\begin{example}
The first terms read
\begin{align*}
\frac{p_1(\mathbf{cr}^{\lambda})}{1!} v_{\lambda}&= \FF_2 v_{\lambda},
\qquad
\frac{p_2(\mathbf{cr}^{\lambda})}{2!}v_{\lambda} = \left[ \FF_3  - \frac{1}{24}\FF_1\right] v_{\lambda}, \qquad \frac{p_3(\mathbf{cr}^{\lambda})}{3!}v_{\lambda} = \left[ \FF_4  - \frac{1}{24}\FF_2\right] v_{\lambda},\\
\frac{p_4(\mathbf{cr}^{\lambda})}{4!}v_{\lambda} &= \left[ \FF_5  - \frac{1}{24}\FF_3 + \frac{7}{5760}\FF_1 \right] v_{\lambda}, \qquad \frac{p_5(\mathbf{cr}^{\lambda})}{3!}v_{\lambda} = \left[ \FF_6  - \frac{1}{24}\FF_4 + \frac{7}{5760}\FF_2\right] v_{\lambda}.\\
\end{align*}
\end{example}

\subsection{Boson-Fermion correspondence: $\mathcal{F}$ operators in terms of $\alpha$ operators}
\label{sec:BosonFermion}
The goal of this section is to express the operators we use in terms of sums of strings of $\alpha$ operators weighted by coefficients enriched with a geometric meaning (in fact Gromov-Witten invariants).
\par
First of all, it is well known that Boson-Fermion correspondence gives the following expression for $\FF_l $ in terms of $\alpha$ operators
\begin{equation}\label{eq:Fasalpha}
\FF_l = [z^l].\frac{1}{\varsigma(z)} \sum_{s > 0} \sum_{n,m \geq 0} \frac{1}{m!n!} \bigg(\sum_{\substack{k_1 + \cdots + k_m = s \\ k_i \geq 1}} \prod_{i = 1}^m \frac{\varsigma(k_iz)}{k_i} \alpha_{k_i}\bigg)\bigg(\sum_{\substack{\ell_1 + \cdots + \ell_{n} = s \\ \ell_i > 0}} \prod_{i = 1}^n \frac{\varsigma(\ell_i z)}{l_i}\alpha_{-{\ell_i}}\bigg).
\end{equation}
\noindent
Since $\FF_l^{sh} = \FF_l + c_l$, it is enough to add the $s=0$ term in the sum:
\begin{equation}
\FF_l^{sh} = [z^l].\frac{1}{\varsigma(z)} \sum_{s \geq 0} \sum_{n,m \geq 0} \frac{1}{m!n!} \bigg(\sum_{\substack{k_1 + \cdots + k_m = s \\ k_i \geq 1}} \prod_{i = 1}^m \frac{\varsigma(k_iz)}{k_i} \alpha_{k_i}\bigg)\bigg(\sum_{\substack{\ell_1 + \cdots + \ell_{n} = s \\ \ell_i > 0}} \prod_{i = 1}^n \frac{\varsigma(\ell_i z)}{l_i}\alpha_{-{\ell_i}}\bigg).
\end{equation}

\begin{definition}
Let $S\Z^{k + 1 - 2g }$ (resp.$\, S\Z^{k + 1 - 2g }_{sh}$) be the infinite subset of $\Z^{k + 1 - 2g }$ of integer vectors $\textbf{x}$ satisfying
$$
x_1 \leq \dots \leq x_l < 0 < x_{l+1} \leq \dots \leq x_{k+1 -2g}, \qquad \sum_{i=1}^{k + 1 - 2g} x_i = 0, \qquad l > 0 \;\;\,\, (\text{resp. } l \geq 0).
$$
Let $ \textbf{x}^-_i, \textbf{x}^+_i$ denote the partitions formed by the negative elements multiplied by a minus sign of $\textbf{x}$, and by the positive elements of $\textbf{x}$, respectively:
$$
\textbf{x}^-_i = -\textbf{x}_i = -x_i, \quad i=1, \dots, l, \qquad
\qquad
\textbf{x}^+_i = \textbf{x}_{l+i} = x_{l+i}, \quad i=1, \dots, k+1 - 2g - l.
$$
\end{definition}

Let us recall the following result, expressing $1$-point connected Gromov-Witten correlators with target $\mathbb{P}^1$ in terms of $\SS$ functions.
\begin{theorem}[\cite{OP}, Theorem 2]\label{thm:1ptGW} For any two partitions $\textbf{x}^+$ and $\textbf{x}^-$ of the same size
\begin{equation}
\cor{ \textbf{x}^+,   \tau_{2g - 2 + \ell(\textbf{x}^+) + \ell(\textbf{x}^-)} , \textbf{x}^-  }^{\mathbb{P}^1, \circ}_g
=
\frac{1}{|\Aut (\textbf{x}^+)||\Aut (\textbf{x}^-)|}[z^{2g}] \frac{\prod_{\textbf{x}^+_i} \SS(\textbf{x}_i z)\prod_{\textbf{x}^-_i} \SS(\textbf{x}_i z)}{\SS(z)}.
\end{equation}
\end{theorem}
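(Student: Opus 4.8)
The plan is to read the left-hand side as a single vacuum matrix element in the semi-infinite wedge and to evaluate it through the commutation calculus of the operators $\mathcal{E}_a(z)$, converting $\varsigma$ into $\SS$ at the end. First I would invoke the operator description of the stationary relative Gromov-Witten theory of $\mathbb{P}^1$ underlying \cite{OP}: a stationary descendant $\tau_k(\omega)$ carried with relative profiles $\textbf{x}^+$ and $\textbf{x}^-$ over the two relative points is encoded by $\mathcal{E}_0(z)$ (the point class $\omega$ selecting the zero energy-shift mode, the power $\psi^k$ recorded by the expansion of $\mathcal{E}_0(z)$ in $z$), while the relative conditions become the operators $\alpha_{-\textbf{x}^+_i}$ and $\alpha_{\textbf{x}^-_j}$. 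For a single marked point this gives
\[
\sum_{g} \corcgw{\, \textbf{x}^+,\ \tau_{2g-2+\ell(\textbf{x}^+)+\ell(\textbf{x}^-)}(\omega),\ \textbf{x}^-\,}_{g}\; z^{\,2g-1+\ell(\textbf{x}^+)+\ell(\textbf{x}^-)} = \frac{1}{\mathfrak{z}(\textbf{x}^+)\,\mathfrak{z}(\textbf{x}^-)}\, \corc{\, \prod_j \alpha_{\textbf{x}^-_j}\ \mathcal{E}_0(z)\ \prod_i \alpha_{-\textbf{x}^+_i}\,},
\]
with $\mathfrak{z}(\lambda)=|\Aut(\lambda)|\prod_i\lambda_i$; the dimension constraint forces the descendant level $k=2g-2+\ell(\textbf{x}^+)+\ell(\textbf{x}^-)$, so a single power series in $z$ graded by genus carries all the invariants at once.

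Next I would evaluate the matrix element by transporting the lone $\mathcal{E}_0(z)$ through the $\alpha$'s, writing $\alpha_a=\mathcal{E}_a(0)$ and using $[\mathcal{E}_a(z),\mathcal{E}_b(w)]=\varsigma(aw-bz)\mathcal{E}_{a+b}(z+w)$. Commuting past a creation operator $\alpha_{-\textbf{x}^+_i}$ produces $\varsigma(\textbf{x}^+_i z)$ and lowers the running index by $\textbf{x}^+_i$; commuting past an annihilation operator $\alpha_{\textbf{x}^-_j}$ produces $\varsigma(\textbf{x}^-_j z)$ and raises it by $\textbf{x}^-_j$. In the connected expectation the unique surviving routing is the fully contracted one: $\mathcal{E}_0(z)$ threads through every creation operator, its index descending to $-|\textbf{x}^+|$, then through every annihilation operator, its index climbing back to $0$ by the balance $|\textbf{x}^+|=|\textbf{x}^-|$, so that it finally meets the (co)vacuum as $\mathcal{E}_0(z)$ and contributes $1/\varsigma(z)$ via the regularizing term $\delta_{n,0}/\varsigma(z)$. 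This routing enters with multiplicity one, giving
\[
\corc{\, \prod_j \alpha_{\textbf{x}^-_j}\ \mathcal{E}_0(z)\ \prod_i \alpha_{-\textbf{x}^+_i}\,} = \frac{1}{\varsigma(z)}\,\prod_i \varsigma(\textbf{x}^+_i z)\,\prod_j \varsigma(\textbf{x}^-_j z).
\]

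Finally I would rewrite $\varsigma$ through $\varsigma(az)=az\,\SS(az)$ and $1/\varsigma(z)=1/(z\,\SS(z))$, turning the right-hand side into
\[
z^{\,\ell(\textbf{x}^+)+\ell(\textbf{x}^-)-1}\ \prod_i \textbf{x}^+_i\,\prod_j \textbf{x}^-_j\ \frac{\prod_i \SS(\textbf{x}^+_i z)\,\prod_j \SS(\textbf{x}^-_j z)}{\SS(z)}.
\]
The factors $\prod_i\textbf{x}^+_i\prod_j\textbf{x}^-_j$ cancel the corresponding parts of $\mathfrak{z}(\textbf{x}^+)\mathfrak{z}(\textbf{x}^-)$, leaving exactly $1/(|\Aut(\textbf{x}^+)||\Aut(\textbf{x}^-)|)$; since $\SS$ is even the quotient is an even power series, so matching $z^{2g-1+\ell(\textbf{x}^+)+\ell(\textbf{x}^-)}$ on the left against $z^{\ell(\textbf{x}^+)+\ell(\textbf{x}^-)-1}$ times that even series selects the coefficient $[z^{2g}]$, which is precisely the claimed formula. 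The hard part is the middle step: showing rigorously that in the connected expectation only the single fully contracted routing survives --- every partial routing splits off a disconnected component that is removed by inclusion-exclusion --- and that it enters with multiplicity one. Once this localization is secured, the passage to $\SS$-functions and the coefficient extraction are purely formal.
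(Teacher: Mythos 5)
The paper does not prove this statement: it is imported verbatim as Theorem~2 of \cite{OP}, so there is no internal argument to measure yours against. Judged on its own, your derivation is correct, and it is essentially the standard route from the operator form of the GW/H correspondence to the closed one-point evaluation: specialise the correspondence (quoted later in this paper as equation \eqref{eq:GWH}, i.e.\ Theorem \ref{thm:CJMRu1}) to a single insertion, so that the invariant becomes the coefficient of $z^{k+1}$ in $\corc{\prod\alpha_{\mu_i}\,\mathcal{E}_0(z)\,\prod\alpha_{-\nu_j}}$, evaluate that expectation by commutators, and convert $\varsigma$ into $\SS$; the bookkeeping of powers of $z$, parities, and automorphism factors in your last paragraph is all right. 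Two remarks. First, the step you flag as the hard one is in fact routine: since $[\mathcal{E}_a(z),\alpha_{-b}]=\varsigma(bz)\,\mathcal{E}_{a-b}(z)$ and $[\alpha_{b},\mathcal{E}_a(z)]=\varsigma(bz)\,\mathcal{E}_{a+b}(z)$, the scalar produced by each commutation is independent of the running index $a$; any routing in which $\mathcal{E}$ fails to absorb all the $\alpha$'s either vanishes outright (an $\mathcal{E}_a$ with $a\neq 0$ meeting the vacuum) or splits off a factor $\langle\alpha_{\mu_i}\alpha_{-\nu_j}\rangle$ that inclusion--exclusion removes, and the unique surviving fully contracted term gives $\prod_i\varsigma(\mu_i z)\prod_j\varsigma(\nu_j z)/\varsigma(z)$ with no multiplicity to track. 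Second, be clear about where the mathematical weight sits: all the geometry is carried by the GW/H correspondence you invoke at the outset, which is itself the deep theorem of \cite{OP}. Your argument is therefore a reduction of [OP, Theorem 2] to [OP, Theorem 1] plus a vacuum-expectation computation, not an independent proof; since in \cite{OP} the one-point evaluation and the correspondence are established in tandem, a free-standing version would have to take the correspondence as a black box --- which is precisely the status the present paper accords both statements.
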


\begin{lemma} \label{lem:Fasalpha}
For the operators $\mathcal{F}_{k}$ and $\mathcal{F}_{k}^{sh}$, we have the following identity
\begin{align}
\mathcal{F}_{k} &= \sum_{g=0}^{\infty} \sum_{\textbf{x} \, \in \, S\Z^{k + 1 - 2g}} 
\cor{ \textbf{x}^+,   \tau_{k-1}(\omega) , \textbf{x}^-  }^{\mathbb{P}^1, \circ}_g \prod_{0 > x_i \in \textbf{x}} \alpha_{x_i} \prod_{0 < x_j \in \textbf{x}} \alpha_{x_j},
\\
\mathcal{F}^{sh}_{k} &= \sum_{g=0}^{\infty} \sum_{\textbf{x} \, \in \, S\Z^{k + 1 - 2g}_{sh}} 
\cor{ \textbf{x}^+,   \tau_{k-1}(\omega) , \textbf{x}^-  }^{\mathbb{P}^1, \circ}_g \prod_{0 > x_i \in \textbf{x}} \alpha_{x_i} \prod_{0 < x_j \in \textbf{x}} \alpha_{x_j}.
\end{align}
\begin{proof}
Expand $\mathcal{F}_{k}$ by equation \eqref{eq:Fasalpha}. Observe that we obtain an infinite sum of words in $\alpha$ operators (or strings of $\alpha$ operators), weighted by certain coefficients. The longest possible word has length $k+1$, because $\varsigma(z)^{-1} = z^{-1} + O(z^1)$. Since $\varsigma(z)$ is an odd function in $z$, there are words of length $k$ with non-zero coefficients, and in general the length of the $\alpha$ strings decrease two by two. Let the index $g$ count half the defect of the $\alpha$ strings length. We obtain
\begin{equation*}
\mathcal{F}_{k} = \sum_{g=0}^{\infty} \sum_{\textbf{x} \in S\Z^{k + 1 - 2g}} 
\Bigg[ \frac{1}{|\Aut (\textbf{x}^+)||\Aut (\textbf{x}^-)|}\frac{[z^{k}]}{\prod_{\textbf{x}^+_i}\textbf{x}_i \prod_{\textbf{x}^-_i} \textbf{x}_i } \frac{\prod_{\textbf{x}^+_i} \varsigma(\textbf{x}_i z)\prod_{\textbf{x}^-_i} \varsigma(\textbf{x}_i z)}{\varsigma(z)}\Bigg]  \prod_{0 > x_i \in \textbf{x}} \!\!\!\! \alpha_{x_i} \!\!\!\! \prod_{0 < x_j \in \textbf{x}} \!\!\!\! \alpha_{x_j}
\end{equation*}
Note that $(m!n!)^{-1}$ in equation \eqref{eq:Fasalpha} gets substituted by $|\Aut (\textbf{x}^+)||\Aut (\textbf{x}^-)|$. This is because we are re-summing $m$-uplas and $n$-uplas in terms of partitions (hence elements are ordered) of length $m$ and $n$ respectively. 
 Substituting $\SS(z) = \varsigma(z)/z$ we get
\begin{equation*}
\mathcal{F}_{k} = \sum_{g=0}^{\infty} \sum_{\textbf{x} \in S\Z^{k + 1 - 2g}} 
\frac{1}{|\Aut (\textbf{x}^+)||\Aut (\textbf{x}^-)|}[z^{2g}] \frac{\prod_{\textbf{x}^+_i} \SS(\textbf{x}_i z)\prod_{\textbf{x}^-_i} \SS(\textbf{x}_i z)}{\SS(z)} \prod_{0 > x_i \in \textbf{x}} \alpha_{x_i} \prod_{0 < x_j \in \textbf{x}} \alpha_{x_j}.
\end{equation*}
Applying Theorem \ref{thm:1ptGW} proves the equation for $\mathcal{F}_k$. For $\mathcal{F}_k^{sh}$, simply note that the $s=0$ term corresponds to $[z^k]. \frac{1}{\varsigma(z)} = c_k = \langle \emptyset | \tau_{2k-2}(\omega) | \emptyset\rangle_k^{\mathbb{P}^1, \circ}$ and therefore corresponds to adding the empty $\alpha$ string, which is what the condition $l \geq 0$ takes care of.
\end{proof}
\end{lemma}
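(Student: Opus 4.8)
The plan is to start from the Boson--Fermion expansion \eqref{eq:Fasalpha} of $\mathcal{F}_k$ and reorganise the resulting strings of $\alpha$-operators so that their scalar coefficients assemble into the one-point stationary invariants of Theorem \ref{thm:1ptGW}. The whole argument is a bookkeeping of two things: the combinatorial factors produced when passing from ordered tuples to partitions, and the powers of $z$ that determine which words survive.

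First I would reindex the double sum in \eqref{eq:Fasalpha}. Each summand is a word $\prod_i\alpha_{k_i}\prod_j\alpha_{-\ell_j}$ whose positive indices $(k_1,\dots,k_m)$ and negative indices $(-\ell_1,\dots,-\ell_n)$ satisfy $\sum k_i=s=\sum\ell_j$, so every such word has indices summing to zero. Since the creation operators commute among themselves and likewise the annihilation operators, the word depends only on the two underlying partitions $\mathbf{x}^+$ and $\mathbf{x}^-$; a partition of length $m$ arises from exactly $m!/|\Aut(\mathbf{x}^+)|$ orderings, so the prefactor $1/(m!n!)$ is absorbed precisely into $1/(|\Aut(\mathbf{x}^+)|\,|\Aut(\mathbf{x}^-)|)$. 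After this the operator becomes a sum over pairs of equal-size partitions, which I repackage as the integer vectors $\mathbf{x}$ with negative entries listed before positive ones.

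The second step is the degree count that pins down the index set. As $1/\varsigma(z)$ and each factor $\varsigma(k_iz)/k_i$ are odd and begin at $z^{-1}$ and $z^{1}$ respectively, a word of length $m+n$ can contribute to $[z^k]$ only if $m+n-1\le k$ and $m+n\equiv k+1\pmod 2$; writing $m+n=k+1-2g$ with $g\ge 0$ records exactly this even defect and labels the words by $S\Z^{k+1-2g}$. Extracting $[z^k]$ and substituting $\SS(z)=\varsigma(z)/z$ converts the coefficient into $[z^{2g}]$ of $\prod\SS(\mathbf{x}_iz)/\SS(z)$, and since $\ell(\mathbf{x}^+)+\ell(\mathbf{x}^-)=k+1-2g$ the descendant index $2g-2+\ell(\mathbf{x}^+)+\ell(\mathbf{x}^-)$ of Theorem \ref{thm:1ptGW} equals $k-1$. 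Applying that theorem identifies the coefficient with $\cor{\mathbf{x}^+,\tau_{k-1}(\omega),\mathbf{x}^-}^{\mathbb{P}^1,\circ}_g$, which is the claimed formula for $\mathcal{F}_k$.

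For $\mathcal{F}_k^{sh}$ I would only reinstate the $s=0$ term absent from $\mathcal{F}_k$: it yields the empty $\alpha$-word with coefficient $[z^k]\,1/\varsigma(z)=c_k$, which by \eqref{eq:formscl} is the one-point invariant with empty ramification profiles, i.e. exactly the $l=0$ stratum that the definition of $S\Z^{k+1-2g}_{sh}$ admits. The step I expect to demand the most care is the parity-and-degree argument of the third paragraph, together with checking that the $\SS$-normalisation lines up with Okounkov--Pandharipande's statement on the nose; by contrast the passage from tuples to partitions is routine once the commutativity of equal-sign $\alpha$'s has been noted.
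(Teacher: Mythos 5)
Your proposal is correct and follows essentially the same route as the paper's proof: expand via \eqref{eq:Fasalpha}, absorb the $1/(m!n!)$ into $1/(|\Aut(\mathbf{x}^+)||\Aut(\mathbf{x}^-)|)$ by passing from ordered tuples to partitions, use the parity/degree count of $\varsigma$ to identify the index set $S\Z^{k+1-2g}$, rewrite in terms of $\SS$ and apply Theorem \ref{thm:1ptGW}, and finally reinstate the $s=0$ empty word for $\mathcal{F}_k^{sh}$. Your treatment of the parity constraint and the check that the descendant index comes out to $k-1$ are in fact slightly more explicit than the paper's.
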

%

\begin{definition} \label{def:Gl}Let us define the operators
$
\mathcal{G}_{l+1} := (l -1)! \sum_{k=0}^{\infty} c_{2k - 1}\FF_{l  - (2k- 1)}.
$
\end{definition}
\begin{lemma}
For the operators $G_{l+1}$, we have the expression
\begin{equation}
\mathcal{G}_{l+1} = (l -1)! \!\!\!\!\!\!\!\!\!\!\!\! \sum_{\substack{g_1, g_2 = 0 
\\
\textbf{x} \in S\Z^{l + 2 - 2g_1 - 2g_2}}}^{\infty} 
\!\!\!\!\!\!\!\!\!\!\!\!\!
\cor{ \!\! \tau_{2g_2 - 2}(\omega) \!\! }_{g_2}^{\mathbb{P}^1, \circ} \cor{ \!\!\textbf{x}^+,   \tau_{2g_1 - 2 + \ell(\textbf{x}^+) + \ell(\textbf{x}^-)}(\omega) , \textbf{x}^-\!\!  }^{\mathbb{P}^1,\circ}_{g_1} \prod_{0 > x_i \in \textbf{x}} \!\!\!\! \alpha_{x_i} 
\!\!
\prod_{0 < x_j \in \textbf{x}} \!\!\!\! \alpha_{x_j}.
\end{equation}
\begin{proof}
The proof is a straightforward application of lemma \ref{lem:Fasalpha} and equation \eqref{eq:formscl}.
\end{proof}
\end{lemma}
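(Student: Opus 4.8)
The plan is to unfold the definition of $\mathcal{G}_{l+1}$ by substituting into each $\FF$ the $\alpha$-operator expansion supplied by Lemma \ref{lem:Fasalpha}, and then to reinterpret the scalar coefficients $c_{2k-1}$ as one-point Gromov-Witten invariants via \eqref{eq:formscl}. The factorial $(l-1)!$ is inert and is simply carried along, so all the genuine work lies in matching the two resulting double sums. To avoid a clash with the summation index $k$ appearing inside $\mathcal{F}_k$, I would first rename the index in Definition \ref{def:Gl} and write
\begin{equation*}
\mathcal{G}_{l+1} = (l-1)! \sum_{g_2 = 0}^{\infty} c_{2g_2 - 1}\, \FF_{l - (2g_2 - 1)}.
\end{equation*}

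Next I would insert Lemma \ref{lem:Fasalpha} for $\FF_k$ with $k = l - (2g_2 - 1) = l - 2g_2 + 1$, renaming its internal genus index to $g_1$. The bookkeeping to check carefully is the index shift: the summation set becomes $S\Z^{k+1-2g_1} = S\Z^{l+2-2g_1-2g_2}$, which is exactly the set appearing in the statement, while the descendant subscript becomes $\tau_{k-1} = \tau_{l-2g_2}$. The key observation is that on $S\Z^{l+2-2g_1-2g_2}$ the length constraint $\ell(\textbf{x}^+) + \ell(\textbf{x}^-) = l+2-2g_1-2g_2$ forces the identity $l - 2g_2 = 2g_1 - 2 + \ell(\textbf{x}^+) + \ell(\textbf{x}^-)$, so I may rewrite $\tau_{l-2g_2}$ as $\tau_{2g_1 - 2 + \ell(\textbf{x}^+) + \ell(\textbf{x}^-)}$ without changing anything. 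This is precisely the descendant insertion demanded in the target formula, and it explains why the lemma can state the subscript in the genus-dependent form.

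Finally I would convert the scalar $c_{2g_2-1}$ into a one-point invariant. For $g_2 > 0$, equation \eqref{eq:formscl} gives $c_{2g_2-1} = \cor{\tau_{2g_2-2}(\omega)}_{g_2}^{\mathbb{P}^1,\circ}$ directly, matching the first bracket in the statement; substituting this and merging the two independent sums over $g_1$ and $g_2$ into the single double sum indexed by $(g_1,g_2)$ then yields the asserted expression. The one step I expect to require the most care is the degenerate term $g_2 = 0$: here $c_{-1} = 1$, and one must adopt, as in the GW/H correspondence, the convention that the degree-zero one-point bracket $\cor{\tau_{-2}(\omega)}_0^{\mathbb{P}^1,\circ}$ equals $1$, so that the $g_2 = 0$ slice of the double sum reproduces exactly the bare Lemma \ref{lem:Fasalpha} expansion of $\FF_{l+1}$. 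Once this identification is fixed, the remainder is a routine relabeling.
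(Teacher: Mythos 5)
Your proposal is correct and takes essentially the same route as the paper, whose proof is just the one-line citation of Lemma \ref{lem:Fasalpha} and equation \eqref{eq:formscl}; your index bookkeeping (the shift to $S\Z^{l+2-2g_1-2g_2}$ and the rewriting of $\tau_{l-2g_2}$ via the length constraint) matches what that citation implicitly requires. Your explicit handling of the $g_2=0$ term via the convention $\cor{\tau_{-2}(\omega)}_0^{\mathbb{P}^1,\circ}=c_{-1}=1$ is a detail the paper leaves unstated but is needed for the double sum to start at $g_2=0$.
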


\subsection{Putting the pieces together}

Putting together what we discussed in section \ref{sec:Newton} and in section \ref{sec:LascouxThibon} we obtain
\begin{align*}
h_m(\mathbf{cr}^{\rho})v_{\rho}
&= \sum_{\lambda \vdash m} \frac{1}{\ell(\lambda)!} \prod_i^{\ell(\lambda)} \left((\lambda_i - 1)!\sum_{k_i=0}^{\infty} c_{2k_i - 1}\FF_{\lambda_i  - (2k_i - 1)}\right).v_{\rho} \\
\sigma_m(\mathbf{cr}^{\rho})v_{\rho}
&= \sum_{\lambda \vdash m} \frac{(-1)^{m+ \ell(\lambda)}}{\ell(\lambda)!} \prod_i^{\ell(\lambda)} \left((\lambda_i - 1)!\sum_{k_i=0}^{\infty} c_{2k_i - 1} \FF_{\lambda_i -( 2k_i - 1)}\right).v_{\rho}.
\end{align*}
By the expressions of monotone and strictly monotone Hurwitz numbers in definition \eqref{eq:hfockh}
and \eqref{eq:hfocksigma} and by definition \ref{def:Gl} we have

\begin{align*}
h_{g; \mu, \nu}^{\leq, \bullet} 
& = \frac{1}{\prod_{i=1}^m\mu_i\prod_{j=1}^n\nu_j}
\sum_{\lambda \vdash m} \frac{1}{\ell(\lambda)!}
\cord{
\prod_{i=1}^{\ell(\mu)} \alpha_{\mu_j}
 \prod_i^{\ell(\lambda)} \mathcal{G}_{\lambda_i + 1}
\prod_{j=1}^{\ell(\nu)} \alpha_{-\nu_j}
}
\\
h_{g; \mu, \nu}^{<, \bullet} 
& = \frac{1}{\prod_{i=1}^m\mu_i\prod_{j=1}^n\nu_j}
\sum_{\lambda \vdash m} \frac{(-1)^{m+ \ell(\lambda)}}{\ell(\lambda)!}
\cord{
\prod_{i=1}^{\ell(\mu)} \alpha_{\mu_j}
 \prod_i^{\ell(\lambda)}  \mathcal{G}_{\lambda_i + 1}
\prod_{j=1}^{\ell(\nu)} \alpha_{-\nu_j}
}
\end{align*}

\section{Tropicalisation}
\label{sec:trop}
The goal of this section is to express connected monotone and strictly monotone Hurwitz numbers in terms of tropical covers weighted by Gromov-Witten invariants. This is achived by applying Wick's theorem to the expressions obtained from the bosonification. The main result of the paper is the following.

\begin{theorem}
\label{thm:trop}
Let $g$ be a non-negative integer, and $\mu,\nu$ partitions of the same size $d>0$. 

\begin{align}
h_{g; \mu, \nu}^{\leq, \circ}&=
\sum_{\pi \in \Gamma^{\circ}( \mathbb{P}^1_{\text{trop}}, g; \mu, \nu)}\frac{1}{|\mathrm{Aut}(\pi)|}\frac{1}{\ell(\lambda)!}\prod_{v \in V(\Gamma)} m_v \prod_{e \in E(\Gamma)} \omega_e
\\
h_{g; \mu, \nu}^{<, \circ}&=
\sum_{\pi \in \Gamma^{\circ}( \mathbb{P}^1_{\text{trop}}, g; \mu, \nu)}\frac{1}{|\mathrm{Aut}(\pi)|}\frac{1}{\ell(\lambda)!}\prod_{v \in V(\Gamma)} (-1)^{1 + val(v)} m_v \prod_{e \in E(\Gamma)} \omega_e
\end{align}
where $\Gamma^{\circ, \leq}( \mathbb{P}^1_{\text{trop}}, g; \mu, \nu)$ is the set of connected tropical covers 
$
\pi: \Gamma \longrightarrow \mathbb{P}^1_{trop} = \R
$
with $b=2g-2+\ell(\mu)+\ell(\nu)$ points $p_1,\dots,p_b$ fixed on the codomain $\mathbb{P}^1_{trop}$, such that
\begin{itemize}
\item[\textit{i).}] The unbounded left (resp. right) pointing ends of $\Gamma$ have weights given by the partition $\mu$ (resp. $\nu$).
\item[\textit{ii).}] There exists some $l\le b$, such that $\Gamma$ has $l$ many vertices. Let $V(\Gamma) = \{v_1, \dots, v_l\}$ be the set of its vertices. Then $\pi(v_i)=p_i$. Moreover, let $w_i  = \val(v_i)$ be the corresponding valences.
\item[\textit{iii).}] There are two integers associated to each vertex $v_i$ of $\Gamma$, $(g_1^i, g_2^i) \in \Z_{\geq 0}^2$ for $i=1, \dots, l$, such that we have $g(v_i)=g_1^i + g_2^i$ for the genus at $v_i$ and the following condition holds true

\begin{equation}
h^1(\Gamma) + \sum_{i=1}^l  g(v_i) = g.
\end{equation}


\item[\textit{iv).}] We define a partition $\lambda$ of length $l$ by $\lambda_i=\mathrm{val}(v_i)+2g(v_i)-2$ and impose the Riemann-Hurwitz condition 
\begin{equation}
\sum_{i=1}^l\lambda_i=2g-2+\ell(\mu)+\ell(\nu).
\end{equation}

\item[\textit{v).}]
For each vertex $v_i$, let $\textbf{x}^{+}$ (resp. $ \textbf{x}^-$) be the right-hand (resp. left-hand) side weights. The multiplicity $m_{v_i}$ of $v_i$ is defined to be
$$
m_{v_i} = (\lambda_i-1)!|\mathrm{Aut}(\textbf{x}^{+})||\mathrm{Aut}(\textbf{x}^{-})|
\left(
 \int_{\overline{\mathcal{M}}_{g_1^i, 1}(\mathbb{P}^1, \textbf{x}^{+}, \textbf{x}^-, |\textbf{x}^{+}|)} 
\psi^{2g_1^i - 2 + w_i} ev^{\star}_1(pt)
 \right)
 \!\!\!\!
 \left(
  \int_{\overline{\mathcal{M}}_{g_2^i, 1}} \!\!\!\!\lambda_{g_2^i} \psi^{2g_2^i - 2}
 \right)
$$
\end{itemize}
\end{theorem}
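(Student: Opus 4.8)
The plan is to evaluate the two bosonic expressions for the disconnected numbers produced at the end of Section~\ref{sec:bos},
$$
h_{g;\mu,\nu}^{\le,\bullet}=\frac{1}{\prod_i\mu_i\prod_j\nu_j}\sum_{\lambda\vdash m}\frac{1}{\ell(\lambda)!}\cord{\prod_{i}\alpha_{\mu_i}\;\prod_{i}^{\ell(\lambda)}\mathcal{G}_{\lambda_i+1}\;\prod_{j}\alpha_{-\nu_j}},
$$
together with its signed analogue carrying $(-1)^{m+\ell(\lambda)}$ for $h^{<,\bullet}$, by means of Wick's theorem. First I would replace each factor $\mathcal{G}_{\lambda_i+1}$ by its $\alpha$-string expansion from the Lemma after Definition~\ref{def:Gl}, turning the bracket into one vacuum expectation of a long product of $\alpha$-operators, weighted by a product over $i$ of the coefficients $(\lambda_i-1)!\,\cor{\tau_{2g_2^i-2}(\omega)}^{\mathbb{P}^1,\circ}_{g_2^i}\,\cor{\textbf{x}_i^+,\tau_{2g_1^i-2+w_i}(\omega),\textbf{x}_i^-}^{\mathbb{P}^1,\circ}_{g_1^i}$, summed over genus splittings $(g_1^i,g_2^i)$ and vectors $\textbf{x}_i\in S\Z^{\lambda_i+2-2g_1^i-2g_2^i}$ with $w_i=|\textbf{x}_i|$.

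Then I would apply Proposition~\ref{prop:wick} to this product. Each $\mathcal{G}_{\lambda_i+1}$ becomes a single vertex $v_i$ whose local structure is read off from $\textbf{x}_i$: positive entries give right-pointing germs, negative entries left-pointing germs, so $\val(v_i)=|\textbf{x}_i|=w_i$, and the genus is declared to be $g(v_i)=g_1^i+g_2^i$. The outer operators $\alpha_{\mu_i}$ and $\alpha_{-\nu_j}$ supply the left- and right-pointing ends of weights $\mu$ and $\nu$, which is condition~\textit{i)}. Wick contraction matches germs of equal weight and opposite direction into bounded edges, contributing $\prod_{e}\omega_e$ and $\frac{1}{|\Aut(\pi)|}$ exactly as in \eqref{equ:trop}; the germ-automorphism factors $|\Aut(\textbf{x}_i^+)||\Aut(\textbf{x}_i^-)|$ of \eqref{equ:trop} combine with the prefactor $(\lambda_i-1)!$ and the two one-point invariants at $v_i$ — the first being, by the definition of relative descendants, the integral $\int_{\overline{\mathcal{M}}_{g_1^i,1}(\mathbb{P}^1,\textbf{x}^+,\textbf{x}^-,|\textbf{x}^+|)}\psi^{2g_1^i-2+w_i}ev_1^{\star}(pt)$, and the second, through \eqref{eq:formscl}, the Hodge integral $\int_{\overline{\mathcal{M}}_{g_2^i,1}}\lambda_{g_2^i}\psi^{2g_2^i-2}$ — to give precisely the multiplicity $m_{v_i}$ of condition~\textit{v)}.

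It then remains to verify that the index set of the resulting sum is exactly the set of decorated tropical covers of the theorem. Defining $\lambda_i:=\val(v_i)+2g(v_i)-2=w_i+2(g_1^i+g_2^i)-2$ makes the constraint $\lambda\vdash m$ read as the Riemann--Hurwitz condition $\sum_i\lambda_i=2g-2+\ell(\mu)+\ell(\nu)$ of condition~\textit{iv)}. The genus identity of condition~\textit{iii)} is then automatic rather than an extra hypothesis: summing the last display over $i$ and inserting the handshake count $\sum_i\val(v_i)=2|E(\Gamma)|+\ell(\mu)+\ell(\nu)$ yields $g-1=|E(\Gamma)|-l+\sum_i g(v_i)$, which together with $h^1(\Gamma)=|E(\Gamma)|-l+1$ for a connected graph is exactly $h^1(\Gamma)+\sum_i g(v_i)=g$. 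The passage from the disconnected bracket to the connected covers $\Gamma^\circ$, hence to $h^{\le,\circ}$ and $h^{<,\circ}$, is the same inclusion--exclusion that defines the connected numbers and that, on the Wick side, retains precisely the connected Feynman graphs. Finally, in the strictly monotone case the global sign factorises over vertices, $(-1)^{m+\ell(\lambda)}=\prod_i(-1)^{\lambda_i+1}=\prod_i(-1)^{\val(v_i)+2g(v_i)-1}=\prod_{v}(-1)^{1+\val(v)}$, reproducing the per-vertex sign of the second formula.

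The main difficulty I anticipate is not conceptual but a careful accounting of symmetry factors. One must reconcile the sum over ordered partitions $\lambda$ weighted by $\frac{1}{\ell(\lambda)!}$ with the placement of the $l$ vertices at the ordered points $p_1,\dots,p_l$ and with the automorphism group $\Aut(\pi)$ of the cover, while at the same time checking that the $\frac{1}{m!n!}$ of \eqref{eq:Fasalpha} has been correctly traded for the germ-automorphisms $|\Aut(\textbf{x}_i^\pm)|$ when resumming tuples into partitions. The safe way to do this is to set up, vertex by vertex, the weight-preserving bijection between ordered germ-weight tuples at $v_i$ and the pair of partitions $\textbf{x}_i^\pm$, and to recover each cover from the left-to-right sequence of vertex types, so that every symmetry is quotiented exactly once; the Riemann--Hurwitz and genus equivalences established above then certify that the two indexing sets are in weight-preserving bijection, completing the proof.
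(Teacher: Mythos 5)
Your proposal is correct and follows essentially the same route as the paper's proof: expand each $\mathcal{G}_{\lambda_i+1}$ into $\alpha$-strings weighted by the two one-point Gromov--Witten invariants, apply Wick's theorem to produce the tropical covers, reassign the genera $g(v_i)=g_1^i+g_2^i$ and verify conditions \textit{iii)} and \textit{iv)} via the handshake/Euler-characteristic computation, and absorb the remaining prefactors into the vertex multiplicities $m_{v_i}$. The only addition is your explicit factorisation of the sign $(-1)^{m+\ell(\lambda)}=\prod_{v}(-1)^{1+\val(v)}$ in the strictly monotone case, which the paper leaves implicit as ``completely parallel.''
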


\begin{proof}
We will work out the details for the monotone case, as the strictly monotone case is completely parallel. By the previous section, we have
\begin{equation}
h_{g; \mu, \nu}^{\leq, \circ}  = 
\frac{1}{\prod_{i=1}^m\mu_i\prod_{j=1}^n\nu_j}\sum_{\lambda \vdash m} \frac{1}{\ell(\lambda)!}
\cor{
\prod_{i=1}^{\ell(\mu)} \alpha_{\mu_j}
 \prod_i^{\ell(\lambda)} \mathcal{G}_{\lambda_i + 1}
\prod_{j=1}^{\ell(\nu)} \alpha_{-\nu_j}
}^{\circ}.
\end{equation}

A generic summand of this expression is given by
\begin{align*}
&\frac{1}{\prod_{i=1}^m\mu_i\prod_{j=1}^n\nu_j}\frac{\prod (\lambda_i-1)!}{\ell(\lambda)!}\prod_{i=1}^{\ell(\lambda)}\cor{ \!\! \tau_{2g^i_2 - 2}(\omega) \!\! }_{g^i_2}^{\mathbb{P}^1, \circ}\times
\!\!\!\!
\\
&\cor{ \!\!\textbf{x}^{+,i},   \tau_{2g_1 - 2 + \ell(\textbf{x}^{+,i}) + \ell(\textbf{x}^{-,i})}(\omega) , \textbf{x}^-\!\!  }^{\mathbb{P}^1,\circ}_{g_1} \cor{\prod_{i=1}^{\ell(\mu)}\alpha_{\mu_i}\prod_{i=1}^{\ell(\lambda)}\alpha_{x^i_1}\cdots\alpha_{x^i_{\lambda_i+1-2g^i_1-2g^i_2}}\prod_{i=1}^{\ell(\nu)}\alpha_{\nu_i}}^{\circ},
\end{align*}
where $g^i_1,g^i_2\in\mathbb{Z}_{\ge0}$, $x^i\in S\mathbb{Z}^{\lambda_i+1-2g_1-2g_2}$ and $x^i_1\le\dots\le x^i_{l_i}<0<x^i_{l_i+1}\le\dots x^i_{\lambda_i+1-2g_1^i-2g_2^i}$ for some $l_i\in[\lambda_i+1-2g^i_1-2g^i_2]$ for all $i\in[\ell(\lambda)]$.\par
Let us now apply Wick's theorem \ref{prop:wick} to the last vacuum expectation obtaining consider the vacuum expectation
\begin{equation*}
\cor{\prod_{i=1}^{\ell(\mu)}\alpha_{\mu_i}\prod_{i=1}^{\ell(\lambda)}\alpha_{x^i_1}\cdots\alpha_{x^i_{\lambda_i+1-2g^i_1-2g^i_2}}\prod_{i=1}^{\ell(\nu)}\alpha_{\nu_i}}^{\circ}
\!\!\!
 = 
 \!\!\!\!\!
 \sum_{\pi \in \Gamma^{\circ}( \mathbb{P}^1_{\text{trop}}, 0; \mu, \nu)}\frac{1}{|\mathrm{Aut}(\pi)|} \prod_{i=1}^{\ell(\lambda)}|\mathrm{Aut}(x_i^+)||\mathrm{Aut}(x_i^-)|
 \!\!\!\! 
 \prod_{e \in E(\Gamma)} \!\!\!\! \omega_e,
\end{equation*}
These covers already satisfy conditions \textit{i).} and \textit{ii.)}. We note that since we are concerned with connected vacuum expectations, we only need to consider connected tropical covers.
\par
As observed in proposition \ref{prop:wick} the vertices of all tropical covers involved are of genus $0$. To relate these tropical covers to the desired ones, we associate new genera to the vertices. We start with a fixed generic expression with fixed data $g_1^i,g_2^i$ for $i=1,\dots,\ell(\lambda)$ for a fixed $\lambda$, such that $|\lambda|=2g-2+\ell(\mu)+\ell(\nu)$.\par
We associate a new tropical cover to each cover involved in equation \eqref{equ:trop} by setting $g(v_i)=g_1^i+g_2^i$. This obviously a bijection and it preserves automorphisms. We now check that $h^1(\Gamma)+\sum_{i=1}^lg(v_i)=g$. Recall that the Euler characteristic for graphs reads $|V|-|E|=1-h^1(\Gamma)$, where $|V|$ is the number of vertices and $|E|$ is the number of edges. We observe $|V|=\ell(\mu)+\ell(\nu)+\ell(\lambda)$. Moreover, by the handshake lemma, we obtain
\begin{align}
|E|=\frac{1}{2}\sum_{v\in V(\Gamma)}\mathrm{val}(v)&=\frac{1}{2}\left(\sum_{i=1}^{\ell(\lambda)}(\lambda_i+2-2g(v_i))+\ell(\mu)+\ell(\nu)\right)\\
&=\frac{1}{2}\left(|\lambda|+2\ell(\lambda)-2\sum g(v_i)+\ell(\mu)+\ell(\nu)\right)
\end{align}
and substituting $|\lambda|=2g-2+\ell(\mu)+\ell(\nu)$, we obtain
\begin{equation}
|E|=\frac{1}{2}\left(2g-2+2\ell(\mu)+2\ell(\nu)+2\ell(\lambda)-2\sum_{i=1}^lg(v_i)\right)=g-1+\ell(\mu)+\ell(\nu)+\ell(\nu)-\sum_{i=1}^lg(v_i).
\end{equation}
Thus by imposing the Euler characteristic constraint, we obtain
\begin{equation}
1-h^1(\Gamma)=(\ell(\mu)+\ell(\nu)+\ell(\lambda))-(g-1+\ell(\mu)+\ell(\nu)+\ell(\nu)-\sum_{i=1}^lg(v_i))=1-g+\sum_{i=1}^lg(v_i).
\end{equation}
Thus, we obtain $g=h^1(\gamma)+\sum_{i=1}^l(g_1^i+g_2^i)$ as required, and the new associated tropical covers satisfy condition \textit{iii.)}. Condition \textit{iv).} is fulfilled by construction.\par
Now, we incorporate the prefactor of the above generic summand into the tropical cover as global and local multiplicities. Recall that the global prefactor is
\begin{align*}
&\frac{1}{\prod_{i=1}^m\mu_i\prod_{j=1}^n\nu_j}\frac{\prod (\lambda_i-1)!}{\ell(\lambda)!}\prod_{i=1}^{\ell(\lambda)}\cor{ \!\! \tau_{2g^i_2 - 2}(\omega) \!\! }_{g^i_2}^{\mathbb{P}^1, \circ} \cor{ \!\!\textbf{x}^{+,i},   \tau_{2g_1 - 2 + \ell(\textbf{x}^{+,i}) + \ell(\textbf{x}^{-,i})}(\omega) , \textbf{x}^-\!\!  }^{\mathbb{P}^1,\circ}_{g_1}
\end{align*}
whereas the tropical covers to be weighted by
\begin{equation}
\label{equ:trop}
\sum_{\pi \in \Gamma( \mathbb{P}^1_{\text{trop}}; \mu, \nu)}\frac{1}{|\mathrm{Aut}(\pi)|} \prod_{i=1}^n|\mathrm{Aut}(x_i^+)||\mathrm{Aut}(x_i^-)|\prod_{e \in E(\Gamma)} \omega_e.
\end{equation}
We define the multiplicity of the $i-$th vertex by
\begin{equation}
m_{v_i}=(\lambda_i-1)!|\mathrm{Aut}(x_i^+)||\mathrm{Aut}(x_i^-)\cor{ \!\! \tau_{2g^i_2 - 2}(\omega) \!\! }_{g^i_2}^{\mathbb{P}^1, \circ} \cor{ \!\!\textbf{x}^{+,i},   \tau_{2g_1 - 2 + \ell(\textbf{x}^{+,i}) + \ell(\textbf{x}^{-,i})}(\omega) , \textbf{x}^-\!\!  }^{\mathbb{P}^1,\circ}_{g_1}
\end{equation}
and we weight each cover by
\begin{equation}
\frac{1}{\prod\mu_i\prod\nu_j}\frac{1}{\ell(\lambda)!}\frac{1}{|\mathrm{Aut}(\pi)|}\prod_{e\in E(\Gamma)}\omega_e\prod m_{v_i}
\end{equation}
to obtain the same contribution. Finally, we note that the weights of the unbounded edges of each cover contribute a factor of $\prod\mu_i\prod\nu_j$ in the last product, which yields the weight of each cover to be
\begin{equation}
\frac{1}{\ell(\lambda)!}\frac{1}{|\mathrm{Aut}(\pi)|}\prod\omega_e\prod m_{v_i},
\end{equation}
where the product is taken over all inner edges of $\Gamma$. This concludes the proof of the theorem.
\end{proof}

\subsection{Comparison with the tropical curves obtained in \cite{DK,hahn2017monodromy}}
In this section, we compare the tropical curves obtained in theorem \ref{thm:trop} to the ones derived in \cite{DK,hahn2017monodromy} by means of a particular example. Both these papers use the notion of \textit{monotone monodromy graphs associated to the data }$(g,\mu,\nu)$ (for a formal definition of such graphs we refer to the original papers). For us is relevant that such graphs carry extra combinatorial data \textemdash \, including each edge being bi-labelled and being coloured in either \textit{solid, dashed, or bold}.

\begin{theorem}[\cite{DK,hahn2017monodromy}]
For $g\ge0$ and $\mu,\nu$ partitions of the same size, we have
\begin{equation}
h_{g;\mu,\nu}^{\le,\circ}=\frac{1}{\prod\mu_i}\sum_{G}\frac{1}{|\mathrm{Aut}(G)|}\prod\omega_e,
\end{equation}
where we sum over all monodromy graphs associated to $(g,\mu,\nu)$. In this notation $\omega_e$ denotes the weight of an edge $e$ of $G$ and the product is taken over a specific subset $H$ of the edges of $G$.
\end{theorem}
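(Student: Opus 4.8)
The plan is to reduce the monotone Hurwitz number to a weighted enumeration of monotone factorisations in the symmetric group, and then to encode each such factorisation as a decorated graph. First I would unwind the definition \eqref{eq:hfockh}. Since the operator $\mathcal{D}^{(h)}(u)$ acts diagonally on the basis $v_\lambda$ with eigenvalue the generating series of complete homogeneous symmetric polynomials $h_v$ evaluated on the contents $\mathbf{cr}^\lambda$, and since those contents are precisely the eigenvalues of the Jucys--Murphy elements, the coefficient extraction $[u^m]$ with $m=2g-2+\ell(\mu)+\ell(\nu)$ converts the factor $h_m(\mathbf{cr}^\lambda)$ into a sum over weakly increasing products of Jucys--Murphy elements. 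The weak inequality $i_1\le\dots\le i_m$ built into $h_m$ is exactly the monotonicity constraint. Via the standard Frobenius-type dictionary between such central-algebra expressions and factorisations, this rewrites $h_{g;\mu,\nu}^{\le,\circ}$ as $\tfrac{1}{\prod\mu_i}$ times the number of transitive tuples $(\tau_1,\dots,\tau_b)$ of transpositions $\tau_i=(a_i\,b_i)$, $a_i<b_i$, with $\sigma_\mu\,\tau_1\cdots\tau_b=\sigma_\nu$ and $b_1\le\dots\le b_b$.

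The second step is to build the monodromy graph $G$ from such a tuple. I would read the factorisation as a ``movie'' of the permutation $\sigma_\mu$ evolving under successive multiplication by the $\tau_i$, drawing a trivalent vertex for each (simple) branch point and recording the cycle structure between consecutive branch points by edges whose weights $\omega_e$ are the cycle lengths. The unbounded ends pointing left receive the weights $\mu$ and those pointing right the weights $\nu$. The extra combinatorial decoration --- the bi-labelling of edges and their colouring as \emph{solid}, \emph{dashed}, or \emph{bold} --- is precisely what records at each vertex whether the transposition joins two cycles, splits one, or leaves the partition type unchanged, together with the relative order forced by the monotone condition on the $b_i$.

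The third step is to match weights. Each isomorphism class of decorated graph $G$ should arise from exactly $\tfrac{1}{|\mathrm{Aut}(G)|}\prod_{e\in H}\omega_e$ distinct monotone factorisations: the product over the distinguished edge subset $H$ counts the internal choices of cyclic labelling realising each prescribed cycle length, while dividing by $|\mathrm{Aut}(G)|$ removes the overcounting by graph symmetries. Summing over all admissible $G$ and carrying the global normalisation $\tfrac{1}{\prod\mu_i}$ then yields the asserted formula.

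The hard part will be the bookkeeping in this last step: one must verify that the monotonicity constraint is faithfully encoded by the admissible colourings, so that the set of monotone factorisations fibres cleanly over the graphs and each fibre has exactly the claimed cardinality. This is in essence a transfer-matrix (cut-and-join) argument carried out vertex by vertex along the linear order of branch points, and the delicate point is to ensure that the automorphism factor $|\mathrm{Aut}(G)|$ and the edge-weight product $\prod_{e\in H}\omega_e$ interact correctly, so that no factorisation is counted with the wrong multiplicity.
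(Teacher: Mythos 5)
This statement is not proved in the paper at all: it is quoted verbatim from \cite{DK,hahn2017monodromy} solely to set up the comparison with Theorem \ref{thm:trop}, so there is no in-paper argument to measure your proposal against. Your strategy is, however, the one actually used in those references: Jucys' theorem identifies $h_m(\mathbf{cr}^\lambda)$ with the eigenvalue of the $m$-th complete homogeneous symmetric function of the Jucys--Murphy elements, the weak inequality in $h_m$ becomes the monotonicity condition $b_1\le\dots\le b_m$ on the transposition factors, and the ``movie'' of cycle types produces the monodromy graph. Up to that point the proposal is sound.

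The genuine gap is your third step, which you correctly flag as the hard part but then only assert. The entire content of the theorem is the claim that the fibre of the map \{monotone factorisations\} $\to$ \{monodromy graphs\} has cardinality exactly $\prod_{e\in H}\omega_e$ divided by the automorphisms, \emph{and} the identification of which edges constitute $H$. For ordinary double Hurwitz numbers the analogous product runs over all interior edges, because at each cut-or-join vertex one freely chooses where the transposition acts inside a cycle, contributing a factor of the edge weight. Under the monotonicity constraint this freedom largely disappears: once the maximal entries $b_i$ are forced to be weakly increasing, the choice of $a_i$ at most vertices is pinned down (this is what the solid/dashed/bold colouring records), and only at the vertices adjacent to edges in $H$ does a genuine $\omega_e$-fold choice survive. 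Your proposal never determines $H$ nor proves the fibre count, so as written it establishes only that \emph{some} graph sum of the general shape exists, not the stated formula. Relatedly, your reduction from \eqref{eq:hfockh} must account for the asymmetric prefactor: the Fock-space definition carries $\bigl(\prod\mu_i\prod\nu_j\bigr)^{-1}$ while the theorem has only $\bigl(\prod\mu_i\bigr)^{-1}$, so a factor $\prod\nu_j$ has to be absorbed into the choice of $H$ (note that in the worked example $H$ is just the dashed edge, not all interior edges); this bookkeeping is exactly where an unverified transfer-matrix argument would silently go wrong. To complete the proof you would need the vertex-by-vertex induction of \cite{DK,hahn2017monodromy} that classifies the local pictures and proves the fibre cardinality case by case.
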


\begin{example}[For the data $(g,\mu,\nu)=(0,(2,1),(3))$]
The data $(g,\mu,\nu)=(0,(2,1),(3))$ produces only two monodromy graphs, illustrated at the top of figure \ref{fig:mono}. They do not have any automorphism, and their subset $H$ consists in both cases of the dashed edge. The weights of the edges are illustrated as numbers in round brackets. Thus we obtain
\begin{equation}
h_{0;(2,1),(3)}^{\le,\circ}=\frac{1}{1\cdot 2}\cdot1+\frac{1}{1\cdot 2}\cdot1=1.
\end{equation}
\noindent
In the language of theorem \ref{thm:trop}, we obtain the same result by considering a single tropical cover. Its graph is illustrated at the bottom of figure \ref{fig:mono} \textemdash \, it does not have any automorphism and corresponds to $\lambda=(1)$, which clearly gives $|\ell(\lambda)!|=1$. The local vertex multiplicity can be computed using the relation $c_{2l-1}= \left \langle \tau_{2l-2}(\omega) \right \rangle_{l,1}^{\mathbb{P}^1}$ and theorem \ref{thm:1ptGW}. This yields a vertex multiplicity of $1$ and thus

\begin{equation}
h_{0;(2,1),(3)}^{\le,\circ}=\frac{1}{1}\cdot \frac{1}{1}\cdot 1=1.
\end{equation}

\begin{figure}
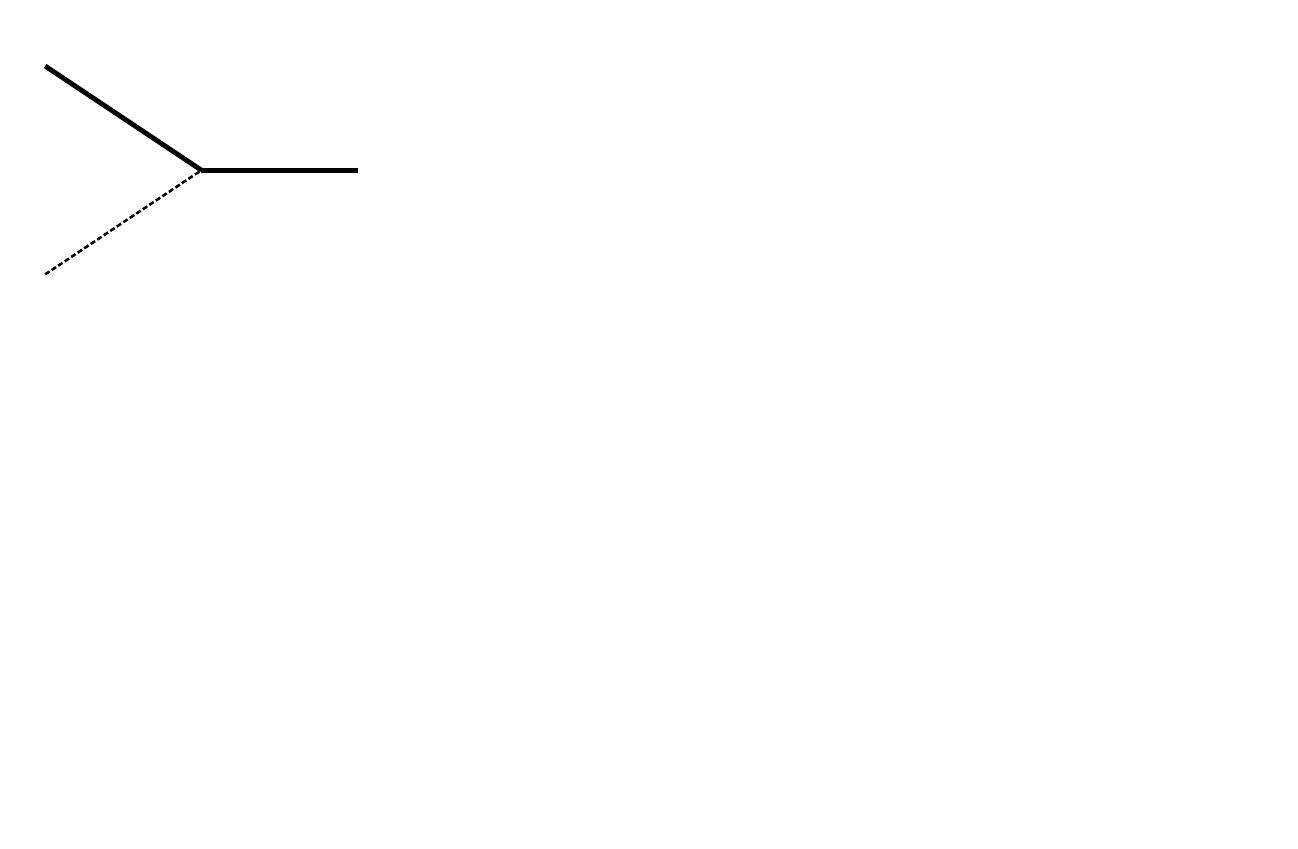
\caption{Comparing the tropical curves appearing in \cite{DK,hahn2017monodromy} (top) to the tropical curves appearing in theorem \ref{thm:trop} (bottom) for $(g,\mu,\nu)=(0,(2,1),(3))$}
\label{fig:mono}
\end{figure}
\end{example}

We now compute a slightly more involved example.

\begin{example}
We consider the case $(g,\mu,\nu)=(0,(3,1),(3,1))$. Using GAP \cite{GAP4}, we compute the monotone Hurwitz number in terms of factorisations in the symmetric group and obtain
\begin{equation}
h_{0;(2,1),(3)}^{\le,\circ}=4.
\end{equation}
We now show that our interpretation in theorem \ref{thm:trop} gives the same number. The three connected graphs for the data $(g,\mu,\nu)=(0,(3,1),(3,1))$ are illustrated in figure \ref{fig:ex2}.
\begin{enumerate}
\item For the graph on the left in figure \ref{fig:ex2}, we obtain $|\mathrm{Aut}(\pi)|=1$. Further, we see that $\lambda=(1,1)$ and thus $\ell(\lambda)!=2$. For both vertices, we obtain $m_v=1$. The only inner edge is of weight $\omega(e)=4$, which yields a contribution of $\frac{1}{1}\cdot\frac{1}{2}\cdot 1\cdot 4=2$.
\item For the graph in the middle of figure \ref{fig:ex2}, we obtain $|\mathrm{Aut}(\pi)=1$. Again, we obtain $\lambda=(1,1)$ and $\ell(\lambda)=2$. For both vertices, we obtain $m_v=1$ and the only inner edge is of weight $\omega(e)=2$. This yields a contribution of $\frac{1}{1}\cdot\frac{1}{2}\cdot1\cdot 2=1$.
\item For the graph on the right in figure \ref{fig:ex2}, we obtain $|\mathrm{Aut}(\pi)=1,\lambda=(2)$ and $\ell(\lambda)!=1$. The only vertex gives $m_v=1$ and there are no inner edges. Thus, we obtain a contribution of $\frac{1}{1}\cdot\frac{1}{1}\cdot1\cdot1=1$.
\end{enumerate}
In total, we obtain $h_{0;(3,1),(3,1)}^{\le,\circ}=2+1+1=4$, which is what we expected.

\begin{figure}
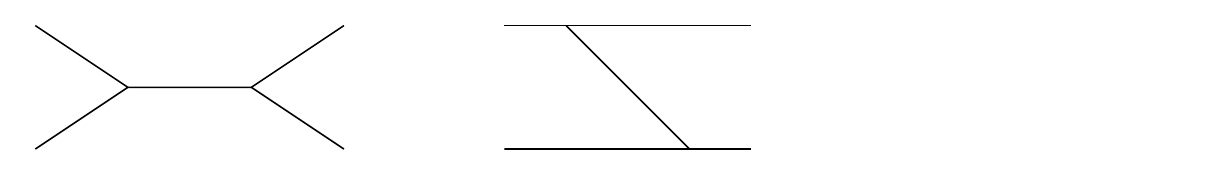
\caption{The tropical curves for $(g,\mu,\nu)=(0,(3,1),(3,1))$ according to theorem \ref{thm:trop}.}
\label{fig:ex2}
\end{figure}

\end{example}

\section{Gromov-Witten theory and tropical curves}
\label{sec:grom}
The goal of this section is to analyse and compare the original \cite{OP} and the tropical \cite{CJMR} version of Gromov-Witten/Hurwitz correpondence via the semi-infinite wedge formalism. The results are summarised at the end of the section.
%


\begin{definition}
Let $M_l(u)$ be the operator
\begin{equation*}
\frac{[z^{l+1}].}{u^{1/2}\varsigma(z u^{1/2})} \sum_{s \geq 0} \sum_{n,m \geq 0} \frac{1}{m!n!} \bigg(\sum_{\substack{\sum_{i=1}^m k_i = s \\ k_i \geq 1}} \prod_{i = 1}^m \frac{u^{-1/2}\varsigma(k_iz u^{3/2})}{k_i} \alpha_{k_i}\bigg)
\!\!
\bigg(\sum_{\substack{\sum_{i=1}^n = s \\ \ell_i \geq 1}} \prod_{i = 1}^n \frac{u^{-1/2}\varsigma(\ell_i z u^{1/2})}{l_i}\alpha_{-{\ell_i}}\bigg).
\end{equation*}
\end{definition}
\noindent
For us it will be relevant that
\begin{equation}\label{eq:MisFsh}
M_l(1) \!\! = \mathcal{F}_{l+1}^{sh}.
\end{equation}
After expanding it in the same way as in Lemma \ref{lem:Fasalpha}, $M_l$ takes the form
\begin{equation}\label{eq:expansionM}
M_l(u) =  \sum_{g=0}^{\infty} \sum_{\textbf{x} \, \in \, S\Z^{l + 2 - 2g}_{sh}} 
\cor{ \textbf{x}^+,   \tau_{l}(\omega) , \textbf{x}^-  }^{\mathbb{P}^1, \circ}_g 
\!\!\!\! 
u^{|\textbf{x}^+| + g -1} 
\prod_{0 > x_i \in \textbf{x}} \!\!\!\! \alpha_{x_i} 
\prod_{0 < x_j \in \textbf{x}} \!\!\!\! \alpha_{x_j}.
\end{equation}
and therefore coincide with the operator $M_l$ defined in \cite{CJMR}. We are now ready to list and compare the two results.

\begin{theorem}[\cite{CJMR}, Theorem 5.3.4]\label{thm:CJMR}
\begin{equation}
|\Aut(\mu)||\Aut(\nu)|\corcgw{\mu, \prod_{i=1}^n \tau_{k_i}(\omega), \nu} = \frac{[u^{g + \ell(\mu) - 1}]}{\prod_i \mu_i \prod_j \nu_j}
\corc{
\prod_{i=1}^{\ell(\mu)} \alpha_{\mu_j}
 \prod_i^{n} M_{k_i}(u)
\prod_{j=1}^{\ell(\nu)} \alpha_{-\nu_j}
}.
\end{equation}
\end{theorem}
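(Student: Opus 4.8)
The plan is to mirror the method already used for Theorem \ref{thm:trop}, now applied directly to stationary descendant Gromov--Witten invariants rather than to Hurwitz numbers. Since the expansion \eqref{eq:expansionM} shows that $M_l(u)$ agrees term-by-term with the operator introduced in \cite{CJMR}, the statement is in principle a restatement of \cite{CJMR}, Theorem 5.3.4; the point of a self-contained argument is to recover it intrinsically from the building blocks assembled above. First I would expand each factor $M_{k_i}(u)$ on the right-hand side by means of \eqref{eq:expansionM}, turning the vacuum expectation into an (infinite) sum of vacuum expectations of strings of $\alpha$ operators, each weighted by a product over the $n$ insertions of one-point connected invariants $\corcgw{\textbf{x}^{+,i}, \tau_{k_i}(\omega), \textbf{x}^{-,i}}_{g_i}$ and a monomial $u^{|\textbf{x}^{+,i}| + g_i - 1}$.

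Next I would apply Wick's theorem (Proposition \ref{prop:wick}) to each resulting $\alpha$-string expectation. As in the proof of Theorem \ref{thm:trop}, this rewrites the purely bosonic part as a sum over connected tropical covers $\pi \colon \Gamma \to \mathbb{R}$ with exactly $n$ vertices, one for each operator $M_{k_i}(u)$: the left- and right-pointing germs at the $i$-th vertex are dictated by $\textbf{x}^{-,i}$ and $\textbf{x}^{+,i}$, the internal edges arise from matched germs and carry the weights $\omega_e$, and the unbounded ends carry the fixed profiles $\mu$ and $\nu$. The $\alpha$-pairing normalisation \eqref{eq:pairing} produces the prefactor $1/(\prod_i\mu_i\prod_j\nu_j)$ together with the automorphism factors $|\Aut(\mu)||\Aut(\nu)|$ attached to the boundary, while the symmetrisation of the internal germs accounts for $|\Aut(\textbf{x}^{+,i})||\Aut(\textbf{x}^{-,i})|$ and for $1/|\Aut(\pi)|$.

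The essential step is then the geometric identification: I would interpret the weighted graph sum as the relative Gromov--Witten degeneration formula for the target $\mathbb{P}^1$. Degenerating $\mathbb{P}^1$ into a chain of $n$ rational components, each carrying a single stationary insertion $\tau_{k_i}(\omega)$, expresses the connected $n$-point invariant $\corcgw{\mu, \prod_i \tau_{k_i}(\omega), \nu}$ as a sum over gluing data of products of the one-point relative invariants sitting at the vertices, with the nodes weighted by their contact orders, which are exactly the edge weights $\omega_e$. Matching the genus is then a bookkeeping exercise in the $u$-grading: summing the local exponents $|\textbf{x}^{+,i}| + g_i - 1$ over the vertices and using the Euler-characteristic relation $h^1(\Gamma) + \sum_v g(v) = g$ (as in the proof of Theorem \ref{thm:trop}) shows that the coefficient extraction $[u^{g+\ell(\mu)-1}]$ isolates precisely the genus-$g$ connected contribution.

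The main obstacle is this last identification, namely proving that Wick contractions reproduce the degeneration formula with the correct contact multiplicities at every node. This is where one genuinely needs the gluing/TQFT structure underlying the Gromov--Witten theory of curves, i.e.\ the relative degeneration formula together with the computation of the one-leg building block encapsulated in Theorem \ref{thm:1ptGW}; equivalently, it is the content of the operator formalism of \cite{OP,Johnson} that composition of the operators $M_{k_i}(u)$ along the vacuum-to-vacuum channel implements gluing of relative maps. All remaining steps, namely the combinatorial expansion, the application of Wick's theorem, and the tracking of $u$-powers and automorphism factors, are routine and parallel to the proof of Theorem \ref{thm:trop} already given.
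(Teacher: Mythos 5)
Your outline is sound in its broad strokes, but it takes a noticeably heavier route than the paper. The paper does not re-derive the geometric content of the correlator at all: it observes that $M_l(1)=\mathcal{F}_{l+1}^{sh}$ (equation \eqref{eq:MisFsh}), quotes the Okounkov--Pandharipande GW/H correspondence (Theorem \ref{thm:CJMRu1}) as the identification of the $u=1$ specialisation with the connected stationary invariant, and then reduces Theorem \ref{thm:CJMR} to a single new statement: the correlator, viewed as a series in $u$, is concentrated in degree $K_0=g+\ell(\mu)-1$ (the concentration-of-degree proposition, whence Corollary \ref{cor:equiv5152}). Your Euler-characteristic bookkeeping of the $u$-powers is exactly the paper's second proof of that proposition, so that part of your argument matches. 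Where you diverge is in trying to re-derive the geometric identification from scratch via Wick's theorem plus the degeneration of $\mathbb{P}^1$ into a chain of rational components. You correctly flag this as the main obstacle, and it is one: matching Wick contractions against the degeneration formula requires handling the trivial and disconnected components of the covers of intermediate bubbles (the edges that pass over a vertex), the contact-order multiplicities at the nodes, and the connected-versus-disconnected bookkeeping \textemdash\, which is essentially the entire proof of the GW/H correspondence in \cite{OP}. Since that theorem is already available and is stated in the paper, your detour re-proves a black box rather than using it. One conceptual correction: the extraction $[u^{g+\ell(\mu)-1}]$ does not ``isolate the genus-$g$ contribution'' from other genera, because the genus is already pinned down by Riemann--Hurwitz once $\mu$, $\nu$ and the $k_i$ are fixed; the content of the concentration proposition is that \emph{every} non-vanishing summand sits in that single $u$-degree, so the extraction is in fact redundant. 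That redundancy is precisely what makes Theorem \ref{thm:CJMR} equivalent to, and a refinement of, the GW/H correspondence, which is the punchline of the section.
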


\begin{theorem}[\cite{OP}, GW/H correspondence]
\label{thm:CJMRu1}
\begin{equation}
|\Aut(\mu)||\Aut(\nu)|\corcgw{\mu, \prod_{i=1}^n \tau_{k_i}(\omega), \nu} = \frac{1}{\prod_i \mu_i \prod_j \nu_j}
\corc{
\prod_{i=1}^{\ell(\mu)} \alpha_{\mu_j}
 \prod_i^{n} M_{k_i}(1)
\prod_{j=1}^{\ell(\nu)} \alpha_{-\nu_j}
}
\end{equation}
\end{theorem}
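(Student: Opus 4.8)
The plan is to derive Theorem \ref{thm:CJMRu1} as an immediate specialization of Theorem \ref{thm:CJMR}. The key observation is that the only difference between the two statements is the presence of the formal variable $u$: Theorem \ref{thm:CJMR} carries a bracket $[u^{g + \ell(\mu) - 1}]$ extracting a specific coefficient in $u$, whereas Theorem \ref{thm:CJMRu1} uses $M_{k_i}(1)$ with no coefficient extraction. Thus the natural strategy is to show that setting $u=1$ in the right-hand side of Theorem \ref{thm:CJMR} is equivalent to extracting the prescribed coefficient, which will follow from the fact that $u$ tracks a fixed total degree across the whole correlator.

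First I would use the expansion \eqref{eq:expansionM}, which shows that each operator $M_{k_i}(u)$ contributes a factor $u^{|\textbf{x}^+| + g_i - 1}$, where the sum runs over $\textbf{x} \in S\Z^{k_i + 2 - 2g_i}_{sh}$. The crucial step is to argue that, after pairing the $\alpha$ operators against $\prod_j \alpha_{-\nu_j}$ and $\prod_i \alpha_{\mu_j}$ via Wick's theorem (Proposition \ref{prop:wick}), every surviving (non-zero) term in the vacuum expectation carries the \emph{same} total power of $u$, namely $g + \ell(\mu) - 1$. I would establish this by a conservation-of-degree computation: summing the individual exponents $|\textbf{x}^{+,i}| + g_i - 1$ over all vertices, and using that the positive weights $\textbf{x}^{+,i}$ either pair internally (contributing to inner edges) or map to the ends $\nu$, together with the genus count $\sum_i g_i + h^1(\Gamma) = g$ furnished by the Euler-characteristic argument already carried out in the proof of Theorem \ref{thm:trop}. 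Once the total $u$-exponent is shown to be identically $g + \ell(\mu) - 1$ on every nonzero term, the operator $[u^{g+\ell(\mu)-1}]$ acts as the identity on the relevant correlator, so it may be dropped and $u$ set to $1$.

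The bridge between the two forms is then \eqref{eq:MisFsh}, namely $M_l(1) = \mathcal{F}^{sh}_{l+1}$, which licenses writing the right-hand side with $u$ specialized to $1$. Since the left-hand sides of the two theorems are literally identical, equating the right-hand sides after the coefficient-extraction-versus-specialization identification completes the proof.

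The main obstacle will be the bookkeeping in the degree-conservation step: one must check carefully that the exponent $\sum_i (|\textbf{x}^{+,i}| + g_i - 1)$ collapses to the single value $g + \ell(\mu) - 1$ using the balancing condition (positive edge germs matched to negative ones of equal weight) and the relation between $h^1(\Gamma)$, $\sum_i g_i$, and $g$. This is exactly the kind of Riemann-Hurwitz accounting done in Theorem \ref{thm:trop}, so I would import that computation rather than redo it; the subtlety is simply ensuring the $u$-weighting in \eqref{eq:expansionM} is consistent with the point-insertion degrees so that no term with a different exponent contributes.
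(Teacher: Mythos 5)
Your proposal is logically sound but runs in the opposite direction from the paper. The paper does not prove Theorem \ref{thm:CJMRu1} at all: it is imported verbatim from \cite{OP}, and the accompanying remark merely explains how to massage the original statement (their Equation 3.2 combined with their Proposition 3.1) into the displayed form, using $M_l(1)=\mathcal{F}^{sh}_{l+1}$ and passing to connected correlators. The paper then uses the concentration-of-degree proposition to deduce Theorem \ref{thm:CJMR} \emph{from} Theorem \ref{thm:CJMRu1}, advertising this as a new proof of the CJMR result. You instead take Theorem \ref{thm:CJMR} as the known input and deduce Theorem \ref{thm:CJMRu1} from it via the same degree-concentration argument. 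Since the equivalence (Corollary \ref{cor:equiv5152}) is an if-and-only-if, and since Theorem 5.3.4 of \cite{CJMR} has an independent tropical proof, your derivation is valid as a standalone argument; your conservation-of-degree computation is essentially the paper's Proposition (in its Euler-characteristic incarnation), so the technical core coincides. What you should be aware of is the circularity hazard: if your proof were installed as the paper's proof of Theorem \ref{thm:CJMRu1}, the paper's stated payoff --- ``another proof of Theorem \ref{thm:CJMR} by combining Theorem \ref{thm:CJMRu1} and Corollary \ref{cor:equiv5152}'' --- would collapse into a circle. So your route buys a (re)derivation of the Okounkov--Pandharipande statement from the tropical one, which is a legitimate and even interesting reading of the equivalence, but it is not what the paper does, and it silently swaps which of the two theorems is treated as the external input.
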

\begin{remark}
We rearranged GW/H correspondence in a form that is more convenient to our purposes. To derive the formulation above from the original paper \cite{OP}, simply combine their Equation 3.2 with their Proposition 3.1 obtaining
\begin{equation}\label{eq:GWH}
|\Aut(\mu)||\Aut(\nu)|\cordgw{\mu, \prod_{i=1}^n \tau_{k_i}(\omega), \nu} = 
\frac{1}{\prod_i \mu_i \prod_j \nu_j}
\cord{
\prod_{i=1}^{\ell(\mu)} \alpha_{\mu_j}
 \prod_i^{n} \mathcal{F}_{k_i + 1}^{sh}
\prod_{j=1}^{\ell(\nu)} \alpha_{-\nu_j}
},
\end{equation}
then conclude by equation \eqref{eq:MisFsh} and by taking the connected correlators on both sides.
\end{remark}
\noindent
Clearly, in order for both results to hold true, some non-trivial property of the correlator 
$$
\corc{
\prod_{i=1}^{\ell(\mu)} \alpha_{\mu_j}
 \prod_i^{n} M_{k_i}(u)
\prod_{j=1}^{\ell(\nu)} \alpha_{-\nu_j}
}
$$
considered as formal power series $u$ should be involved \textemdash \, in fact, either the degree is completely concentrated in $g + \ell(\mu) - 1$, or the sum of all the coefficients, excluded the coefficient of $u^{g + \ell(\mu) - 1}$, should vanish altogether. We are going to prove that the first is the correct one. 

\begin{proposition}[Concentration of the degree] Let $K_0$ be $\frac{ \sum k_i + \ell(\mu) - \ell(\nu) }{2}$.
\begin{equation}
[u^K]\corc{
\prod_{i=1}^{\ell(\mu)} \alpha_{\mu_j}
 \prod_i^{n} M_{k_i}(u)
\prod_{j=1}^{\ell(\nu)} \alpha_{-\nu_j}
} = 0 \qquad \qquad \text{ for } K \neq K_0.
\end{equation}
\end{proposition}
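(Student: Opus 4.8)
The plan is to substitute the $\alpha$-string expansion \eqref{eq:expansionM} into every factor $M_{k_i}(u)$, reduce the resulting $\alpha$-correlator by Wick's theorem (Proposition \ref{prop:wick}), and observe that the whole expectation becomes a sum, indexed by connected tropical covers decorated with local genera, of \emph{monomials} in $u$. The assertion then amounts to showing that each such monomial already sits in the single degree $K_0$, so that $[u^K](\cdots)$ is an empty sum for $K\neq K_0$.

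First I would expand each $M_{k_i}(u)$ by \eqref{eq:expansionM}: this contributes a choice of a local genus $g_i\ge 0$, a vector $\textbf{x}^i\in S\Z^{k_i+2-2g_i}_{sh}$, a local Gromov--Witten weight, and the monomial $u^{|\textbf{x}^{+,i}|+g_i-1}$. Collecting the $\alpha$-operators and applying Proposition \ref{prop:wick} to the connected expectation, the nonzero contributions are in bijection with connected tropical covers $\pi\colon\Gamma\to\mathbb{P}^1_{\mathrm{trop}}$ whose left (resp.\ right) ends carry the weights of $\mu$ (resp.\ $\nu$), with one vertex $v_i$ per factor of valence $\val(v_i)=k_i+2-2g_i$. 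Since we evaluate the connected correlator $\langle\,\cdot\,\rangle^{\circ}$, only connected covers occur, and every contributing term carries the additive total exponent
\begin{equation}
K=\sum_{i=1}^{n}\bigl(|\textbf{x}^{+,i}|+g_i-1\bigr).
\end{equation}

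The heart of the argument is to show $K=K_0$ for every such cover. Here I would read the local data off the balancing condition: at $v_i$ the germs are exactly the entries of $\textbf{x}^i$, each internal edge is a matching of a right-pointing germ with a left-pointing germ of equal weight, and the unmatched germs are precisely the ends prescribed by $\mu$ and $\nu$. The handshake lemma then reads
\begin{equation}
2\,|E(\Gamma)|+\ell(\mu)+\ell(\nu)=\sum_{i=1}^{n}\val(v_i)=\sum_{i=1}^{n}\bigl(k_i+2-2g_i\bigr),
\end{equation}
and combining it with the Euler relation $|E(\Gamma)|=h^1(\Gamma)+n-1$ for the connected graph $\Gamma$ with $n$ vertices yields the Riemann--Hurwitz identity $\sum_i k_i=2g-2+\ell(\mu)+\ell(\nu)$, where $g:=h^1(\Gamma)+\sum_i g_i$ is the total genus of the cover. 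This pins the total genus to the value $g=(\sum_i k_i+2-\ell(\mu)-\ell(\nu))/2$ fixed by the external data. Reorganizing the per-vertex exponent through the same balancing/orientation bookkeeping expresses $K$ as the intrinsic invariant $K=h^1(\Gamma)+\sum_i g_i+\ell(\mu)-1=g+\ell(\mu)-1$, and substituting the Riemann--Hurwitz value of $g$ gives $K=\tfrac{\sum_i k_i+\ell(\mu)-\ell(\nu)}{2}=K_0$, independently of the chosen cover.

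The step I expect to be the main obstacle is exactly this bookkeeping: matching the exponent $\sum_i(|\textbf{x}^{+,i}|+g_i-1)$ coming from \eqref{eq:expansionM} to the graph-theoretic quantity $h^1(\Gamma)+\sum_i g_i+\ell(\mu)-1$. One must keep careful track of the left/right orientation conventions of Proposition \ref{prop:wick} --- which germs become $\mu$-ends, which become $\nu$-ends, and which pair up into internal edges --- so that the handshake and Euler counts are applied consistently; once the Riemann--Hurwitz relation emerges, the concentration of degree is immediate. Everything upstream, namely the expansion \eqref{eq:expansionM} and Wick's theorem, is formal, and the restriction to connected covers is precisely what the connected expectation $\langle\,\cdot\,\rangle^{\circ}$ supplies. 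As a consistency check, the resulting value $K_0=g+\ell(\mu)-1$ is exactly the coefficient extracted in Theorem \ref{thm:CJMR}, so the proposition is what reconciles Theorem \ref{thm:CJMR} with the $u=1$ specialization in Theorem \ref{thm:CJMRu1}.
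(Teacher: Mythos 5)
Your overall strategy coincides with the paper's second proof of this proposition: expand each $M_{k_i}(u)$ via \eqref{eq:expansionM}, pass to connected tropical covers through Wick's theorem, and deduce the concentration of the $u$-degree from the Euler characteristic of the cover. (The paper also gives a purely algebraic first proof, using only the non-vanishing condition that the positive and negative indices must match up, which forces $\sum_i \ell(\textbf{x}^{-,i}) + \ell(\nu) = \sum_i \ell(\textbf{x}^{+,i}) + \ell(\mu)$, with no graphs at all.) Your derivation of the Riemann--Hurwitz relation from the handshake lemma together with the Euler relation is correct and is essentially what the paper does.

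However, there is a genuine gap exactly where you flag ``the main obstacle'': the identity
\begin{equation*}
\sum_{i=1}^n\bigl(|\textbf{x}^{+,i}|+g_i-1\bigr)=h^1(\Gamma)+\sum_{i=1}^n g_i+\ell(\mu)-1
\end{equation*}
is asserted, not proved, and with the literal reading of $|\textbf{x}^{+,i}|$ it is the \emph{wrong} identity. The entries of $\textbf{x}^{+,i}$ are the positive indices at $v_i$, i.e.\ the right-pointing germs; each such germ is either the left half of a bounded edge or the attachment point of a $\nu$-end, so $\sum_i\ell(\textbf{x}^{+,i})=|E(\Gamma)|+\ell(\nu)$ and your $K$ comes out as $g+\ell(\nu)-1$, not $g+\ell(\mu)-1=K_0$. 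The quantity that does the job is the count of \emph{left}-pointing germs: $\sum_i\ell(\textbf{x}^{-,i})=|E(\Gamma)|+\ell(\mu)$, which together with $|E(\Gamma)|=h^1(\Gamma)+n-1$ gives $\sum_i\bigl(\ell(\textbf{x}^{-,i})+g_i-1\bigr)=g+\ell(\mu)-1$. This is precisely how the paper's proof proceeds: its generic summand carries $u^{\sum_i(p_i+g_i'-1)}$ with $p_i$ the number of \emph{negative} entries at the $i$-th insertion. To complete your argument you must either verify from the definition of $M_l(u)$ that the exponent in \eqref{eq:expansionM} really tracks the creation (left-pointing) legs, or carry out the germ count on the correct side; leaving this as ``bookkeeping'' hides the one step where an orientation error changes the answer by $\ell(\mu)-\ell(\nu)$.
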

\begin{remark}
To avoid confusion, let us remark that the statement of the proposition expresses $K_0$ as $( \sum k_i + \ell(\mu) - \ell(\nu)) / 2$ instead of $g + \ell(\mu) - 1$ because the variable $g$ loses its meaning without mentioning the GW correlator \textemdash\, however, the two quantities are equal since the genus is determined by the Riemann-Hurwitz condition $\sum k_i = 2g - 2 + \ell(\mu) + \ell(\nu)$.
\end{remark}
The proposition immediately implies the following corollary.
\begin{corollary}\label{cor:equiv5152}
Theorem \ref{thm:CJMR} holds true if and only if 
Theorem \ref{thm:CJMRu1} holds true. 
\end{corollary}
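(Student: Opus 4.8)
The plan is to expand every factor $M_{k_i}(u)$ by its $\alpha$-series \eqref{eq:expansionM} and apply Wick's theorem (Proposition~\ref{prop:wick}) to the resulting vacuum expectation. This rewrites the correlator as a sum over connected tropical covers $\pi\colon\Gamma\to\mathbb{P}^1_{\mathrm{trop}}=\mathbb{R}$ whose $n$ internal vertices $v_1,\dots,v_n$ correspond to $M_{k_1}(u),\dots,M_{k_n}(u)$, whose left (resp.\ right) ends carry the weights $\mu$ (resp.\ $\nu$), and where the germs at $v_i$ are recorded by a vector $\textbf{x}^i\in S\mathbb{Z}^{k_i+2-2g_i}_{sh}$ together with a genus $g_i$. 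By \eqref{eq:expansionM} each such term is weighted by a product of one-point Gromov--Witten invariants (evaluated through Theorem~\ref{thm:1ptGW}) and by the monomial $u^{D(\pi)}$, where $D(\pi)=\sum_{i=1}^n\bigl(|\textbf{x}^{+,i}|+g_i-1\bigr)$. It therefore suffices to prove that $D(\pi)=K_0$ for \emph{every} cover $\pi$ contributing a nonzero term: the concentration of the degree then holds automatically and term by term, which is exactly the first of the two alternatives described above, and in particular no cancellation between distinct covers is needed.

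The core of the argument is thus a purely combinatorial identity for $D(\pi)$, obtained by the same Euler-characteristic bookkeeping already used in the proof of Theorem~\ref{thm:trop}. First, the valence of a vertex is read off from its defining vector as $\mathrm{val}(v_i)=\ell(\textbf{x}^{+,i})+\ell(\textbf{x}^{-,i})=k_i+2-2g_i$; summing over $i$ and invoking the handshake lemma $\sum_i\mathrm{val}(v_i)=2|E(\Gamma)|+\ell(\mu)+\ell(\nu)$ expresses $|E(\Gamma)|$ through $\sum_ik_i$, $\sum_ig_i$, $\ell(\mu)$ and $\ell(\nu)$. Next, connectedness together with the Euler relation $|V(\Gamma)|-|E(\Gamma)|=1-h^1(\Gamma)$ and $|V(\Gamma)|=n$ pins down $h^1(\Gamma)$, hence the total genus $g_\Gamma:=h^1(\Gamma)+\sum_ig_i$; a direct substitution shows $g_\Gamma$ depends only on $\sum_ik_i,\ell(\mu),\ell(\nu)$ and equals the genus $g$ fixed by the Riemann--Hurwitz relation $\sum_ik_i=2g-2+\ell(\mu)+\ell(\nu)$. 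I would then re-expand $D(\pi)$ by means of these relations so that the vertex-local quantities $|\textbf{x}^{+,i}|$ and $g_i$ reorganise into $g_\Gamma$ and the boundary lengths, collapsing $D(\pi)$ to the constant $g+\ell(\mu)-1=(\sum_ik_i+\ell(\mu)-\ell(\nu))/2=K_0$, independently of $\pi$.

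I expect the delicate point to be precisely this last reorganisation: $D(\pi)$ is a sum of \emph{vertex-local} contributions $|\textbf{x}^{+,i}|+g_i-1$, whereas $K_0$ is a \emph{global} invariant of the cover, so matching the two forces one to use the balancing (harmonicity) condition $\sum_{x\in\textbf{x}^i}x=0$ at each vertex in full, and to track carefully which germs become bounded edges and which become ends. This is the only step where a naive grading argument would produce merely an inequality $D(\pi)\ge K_0$ rather than an equality, and it is where the distinction between the creation and annihilation halves of each $M_{k_i}(u)$ must be matched against the edge weights; all other manipulations are the same linear identities as in Theorem~\ref{thm:trop}. Finally, Corollary~\ref{cor:equiv5152} is immediate: once the correlator is shown to be a single monomial $c\,u^{K_0}$, its coefficient $[u^{K_0}]$ and its value at $u=1$ coincide, so the right-hand sides of Theorem~\ref{thm:CJMR} and Theorem~\ref{thm:CJMRu1} are literally equal, and each of the two theorems holds if and only if the other does.
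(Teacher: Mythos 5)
Your proposal is correct and follows essentially the same route as the paper: the paper reduces the corollary to the ``Concentration of the degree'' proposition and proves it by showing, term by term in the expansion \eqref{eq:expansionM}, that the $u$-exponent of every non-vanishing summand equals $K_0=g+\ell(\mu)-1$, with one of its two proofs being exactly your Euler-characteristic/handshake-lemma computation on the associated tropical covers. The only difference is that the paper also records a second, purely algebraic proof of the same degree count using the vanishing criterion for $\alpha$-correlators (Lemma \ref{lem:vanishingcondition}) instead of the graph-theoretic bookkeeping.
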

\begin{proof}
To be precise, what we are going to show is that \emph{each vacuum expectation} produced by the product of the formal expansions of the $M_{k_i}(u)$ in degree different from $K_0$ vanishes on its own (i.e. there are no non-trivial cancellations even between summands of the same degree, if that degree differs from $K_0$). 
Let us analyse a generic summand of the vacuum expectation
$
\langle
\prod_{i=1}^{\ell(\mu)} \alpha_{\mu_j}
 \prod_i^{n} M_{k_i}(u)
\prod_{j=1}^{\ell(\nu)} \alpha_{-\nu_j}
\rangle
$
expanded via equation \eqref{eq:expansionM}. A generic such summand is of the form
\begin{equation*}
\prod_{i=1}^n \!\! \cor{ \!\! \textbf{x}^{+, (i)},   \tau_{k_i}(\omega) , \textbf{x}^{-, (i)}  \!\!}^{\mathbb{P}^1, \circ}_{g_i'}
\!\!
\cor{\!\!
\prod_{t=1}^{\ell(\mu)} \!\!\alpha_{\mu_t}
\prod_{i=1}^n 
\alpha_{- \textbf{x}^{-, (i)}_1} \cdots \alpha_{- \textbf{x}^{-, (i)}_{p_i}}
\cdot
\alpha_{ \textbf{x}^{+, (i)}_1} \cdots \alpha_{ \textbf{x}^{+, (i)}_{q_i}}\!\!
\prod_{j=1}^{\ell(\nu)} \alpha_{-\nu_j}\!\!
}  u^{\sum_{i=1}^n (p_i + g_i' - 1)} .
\end{equation*}
Our goal is to show that every non-vanishing summand satisfies the equation
\begin{equation}
\sum_{i=1}^n (p_i + g_i' - 1) = g + \ell(\mu) - 1.
\end{equation}

We provide two different proofs of the concentration of the degree: the first involves the analysis of each vacuum expectation via the commutation relations of the $\alpha$ operators, the second one analyses the tropical curves associated to each vacuum expectation and compute their Euler characteristic.

\begin{proof}[Proof via vacuum expectations analysis]
The vanishing of each such summand comes from the following lemma, which immediately follows from \eqref{eq:pairing} and the commutation relations \eqref{eq:commalphas}. It also follows as immediate corollary from condition \textit{iii).} of Wick's theorem \ref{prop:wick}.
\begin{lemma}\label{lem:vanishingcondition} For any collection of finite sets of non-zero integers $\textbf{x}_1, \dots, \textbf{x}_n$, the vacuum expectation
\begin{equation}
\cor{
\prod_{t=1}^{\ell(\mu)} \alpha_{\mu_t} 
\prod_{0 > x_{i,1} \in \textbf{x}_1} \!\!\!\!\!\!\alpha_{x_{i,1}}
\prod_{0 < x_{i,1} \in \textbf{x}_1} \!\!\!\!\!\!\alpha_{x_{i,1}}
\cdots \cdots
\prod_{0 > x_{i,n} \in \textbf{x}_n} \!\!\!\!\!\!\alpha_{x_{i,n}}
\prod_{0 < x_{i,n} \in \textbf{x}_n} \!\!\!\!\!\!\alpha_{x_{i,n}}
\prod_{j=1}^{\ell(\nu)} \alpha_{-\nu_j}
} = 0
\end{equation}
whenever the following condition is not satisfied:
\begin{equation}
\left\{ \mu_i \right\}_{i = 1, \dots, \ell(\mu)} \cup \textbf{x}_1^+ \cup \dots \cup \textbf{x}_n^+
=
\left\{ \nu_i \right\}_{i = 1, \dots, \ell(\nu)} \cup \textbf{x}_1^- \cup \dots \cup \textbf{x}_n^- .
\end{equation}
In other words, the set of positive indices  appearing in the expression must be equal to the set of the absolute values of the negative indices.
\end{lemma}

\noindent
From lemma \ref{lem:vanishingcondition}, we know that, for every non-vanishing summand, the set of positive indices must be equal to the set of the absolute values of the negative indices. In particular, the cardinalities of the two sets must be equal. This gives us the additional requirement
\begin{equation}\label{eq:nonvanishingcondition}
\sum_{i=1}^{n} p_i + \ell(\nu) =  \sum_{i=1}^{n} q_i + \ell(\mu) \qquad \text{ or, equivalently, } \qquad \sum_{i=1}^{n} p_i - \ell(\mu) =  \sum_{i=1}^{n} q_i - \ell(\nu).
\end{equation}
Now, from the definition of $M_{k_i}(u)$ operators we get $p_i + q_i = k_i + 2 - 2g_i'$, and therefore
\begin{equation}
\sum_{i=1}^n p_i + \sum_{i=1}^n q_i = \sum_{i=1}^n k_i + 2n - 2\sum_{i=1}^n g_i'.
\end{equation}
Imposing the Riemann-Hurwitz constraint $\sum_{i=1}^n k_i = 2g - 2 + \ell(\mu) + \ell(\nu)$, we get
\begin{equation}
 2\left(g - 1 -\sum_{i=1}^n g_i'\right) = \sum_{i=1}^n p_i - \ell(\mu) +  \sum_{i=1}^n q_i - \ell(\nu) - 2n .
\end{equation}
Applying the additional requirement \eqref{eq:nonvanishingcondition} and dividing by two gives the desired equation
\begin{equation}
 g - 1 - \sum_{i=1}^n g_i' =
  \sum_{i=1}^n p_i - \ell(\mu) - n
  \qquad \text{ or, equivalently, } \qquad
  \sum_{i=1}^n (p_i + g_i' - 1) = g + \ell(\mu) - 1.
\end{equation}
\end{proof}

\begin{proof}[Proof via tropical curves' Euler characteristic analysis]
Let $\pi: \Gamma \longrightarrow \mathbb{P}^1_{trop} = \R$ be the tropical curve associated to the generic summand as above. Let $V(\Gamma), E(\Gamma), h^1(\Gamma)$ the set of vertices, the set of edges and the first Betti number of the tropical cover $\Gamma$. Because we are considering connected covers, the Euler characteristic constraint gives
\begin{equation}
|V(\Gamma)| - |E(\Gamma)| = 1  - h^1(\Gamma).
\end{equation}
Let us compute each ingredient separately in terms of the indices of the generic correlator.
\begin{enumerate}
\item[\textit{i).}] $|V(\Gamma)| = \ell(\mu) + \ell(\nu) + n$. 
\item[\textit{ii).}] $|E(\Gamma)| = \sum_{i=1}^n p_i + \ell(\nu)$. Indeed the sum of the $p_i$ counts the incoming edges, and every edge is either an incoming edge for some vertex, or it is an edge of infinite length on the right and therefore must correspond to some part of $\nu$.
\item[\textit{iii).}] $h^1(\Gamma) = g - \sum_{i=1}^n g_i$.
\end{enumerate}
Substuting these quantities in the Euler characteristic contraint we obtain
\begin{equation}
\ell(\mu) + \ell(\nu) + n - \left(\sum_{i=1}^n p_i  + \ell(\nu) \right)
=
1 - \left( g - \sum_{i=1}^n g_i \right),
\end{equation}
which is equivalent to 
\begin{equation}
g + \ell(\mu) - 1  = \sum_{i=1}^n \big(p_i  + g_i - 1\big).
\end{equation}
\end{proof}
\noindent
This concludes the proof of the proposition.
\end{proof}
We can summarise the results of this section in the following two points.
\begin{enumerate}
\item[i).] It provides another proof of Theorem \ref{thm:CJMR} (by combining Theorem \ref{thm:CJMRu1} and Corollary \ref{cor:equiv5152}). 
\item[ii).] It allows a clear comparison between Theorem \ref{thm:CJMR} and GW/H correspondence \ref{thm:CJMRu1} \textemdash \, on the one hand, it shows that Theorem \ref{thm:CJMR} is a \emph{refined} version of the GW/H correspondence, since it discards many summands that contribute trivially and otherwise would be counted. On the other hand, it shows that the formal $u$-variable in Theorem \ref{thm:CJMR} is redundant, which is a non trivial fact, and that getting rid of the $u$-variable recovers nothing but GW/H correspondence.
\end{enumerate}

\bibliography{literature.bib}

\begin{thebibliography}{BBBM17}

\bibitem[ABBR15]{ABBR}
Omid Amini, Matthew Baker, Erwan Brugall{\'e}, and Joseph Rabinoff.
\newblock {Lifting harmonic morphisms I: metrized complexes and {B}erkovich
  skeleta}.
\newblock {\em Research in the Mathematical Sciences}, 2(1):7, 2015.

\bibitem[ALS16]{ALS}
Alexander Alexandrov, Danilo Lewanski, and Sergey Shadrin.
\newblock Ramifications of {H}urwitz theory, {KP} integrability and quantum
  curves.
\newblock {\em Journal of High Energy Physics}, 2016(5):124, 2016.

\bibitem[BBBM17]{boehm2017tropical}
Janko Boehm, Kathrin Bringmann, Arne Buchholz, and Hannah Markwig.
\newblock Tropical mirror symmetry for elliptic curves.
\newblock {\em Journal f{\"u}r die reine und angewandte Mathematik (Crelles
  Journal)}, 2017(732):211--246, 2017.

\bibitem[BBM11]{BBM}
Beno{\^\i}t Bertrand, Erwan Brugall{\'e}, and Grigory Mikhalkin.
\newblock Tropical open {H}urwitz numbers.
\newblock {\em Rendiconti del Seminario Matematico della Universit{\'a} di
  Padova}, 125:157--171, 2011.

\bibitem[BG15]{BG}
Florian Block and Lothar G{\"o}ttsche.
\newblock Fock spaces and refined {S}everi degrees.
\newblock {\em International Mathematics Research Notices},
  2016(21):6553--6580, 2015.

\bibitem[CJM10]{CJM}
Renzo Cavalieri, Paul Johnson, and Hannah Markwig.
\newblock Tropical {H}urwitz numbers.
\newblock {\em Journal of Algebraic Combinatorics}, 32(2):241--265, 2010.

\bibitem[CJM11]{CJMa}
Renzo Cavalieri, Paul Johnson, and Hannah Markwig.
\newblock Wall crossings for double {H}urwitz numbers.
\newblock {\em Advances in Mathematics}, 228(4):1894--1937, 2011.

\bibitem[CJMR16]{CJMR}
Renzo Cavalieri, Paul Johnson, Hannah Markwig, and Dhruv Ranganathan.
\newblock A graphical interface for the {G}romov-{W}itten theory of curves.
\newblock 2016.

\bibitem[DK17]{DK}
Norman Do and Maksim Karev.
\newblock Monotone orbifold {H}urwitz numbers.
\newblock {\em Journal of Mathematical Sciences}, 226(5):568--587, 2017.

\bibitem[GAP18]{GAP4}
The GAP~Group.
\newblock {\em {GAP -- Groups, Algorithms, and Programming, Version 4.9.3}},
  2018.

\bibitem[Hah17]{hahn2017monodromy}
Marvin~Anas Hahn.
\newblock A monodromy graph approach to the piecewise polynomiality of mixed
  double {H}urwitz numbers.
\newblock {\em arXiv preprint arXiv:1703.05590}, 2017.

\bibitem[HKL17]{HKL}
Marvin~Anas Hahn, Reinier Kramer, and Danilo Lewanski.
\newblock Wall-crossing formulae and strong piecewise polynomiality for mixed
  {G}rothendieck dessins d'enfant, monotone, and simple double {H}urwitz
  numbers.
\newblock {\em Accepted in Advances in Mathematics. ArXiv:1710.01047}, 2017.

\bibitem[Joh15]{Johnson}
Paul Johnson.
\newblock Double {H}urwitz numbers via the infinite wedge.
\newblock {\em Transactions of the American Mathematical Society},
  367(9):6415--6440, 2015.

\bibitem[LT04]{LT}
Alain Lascoux and Jean-Yves Thibon.
\newblock Vertex operators and the class algebras of symmetric groups.
\newblock {\em Journal of Mathematical Sciences}, 121(3):2380--2392, 2004.

\bibitem[OP06a]{OP2}
Andrei Okounkov and Rahul Pandharipande.
\newblock The equivariant {G}romov-{W}itten theory of $\mathbb{P}^1$.
\newblock {\em Annals of mathematics}, 163:561--605, 2006.

\bibitem[OP06b]{OP}
Andrei Okounkov and Rahul Pandharipande.
\newblock Gromov-{W}itten theory, {H}urwitz theory, and completed cycles.
\newblock {\em Annals of mathematics}, 163:517--560, 2006.

\bibitem[Vak08]{V}
Ravi Vakil.
\newblock The moduli space of curves and {G}romov-{W}itten theory.
\newblock {\em Enumerative invariants in algebraic geometry and string theory},
  2.2.1(3.3):143--198, 2008.

\end{thebibliography}
\bibliographystyle{alpha}

\end{document}